\newtheorem{theorem}{Theorem}[section]
\newtheorem{corollary}[theorem]{Corollary}
\newtheorem{lemma}[theorem]{Lemma}
\newtheorem{proposition}[theorem]{Proposition}
\newtheorem{remark}[theorem]{Remark}
\numberwithin{equation}{section}
\def\bn{{\mathbb N}}
\def\br{\mathbb{R}}
\def\m{\mu}
\def\cb{{\mathcal B}}
\def\ce{{\mathcal E}}
\def\bn{{\mathbb N}}
\def\bq{{\mathbb Q}}
\def\br{{\mathbb R}}
\def\bz{{\mathbb Z}}
\begin{document}
\title{The Dynamics of The Potts-Bethe Mapping over $\mathbb{Q}_p:$ The Case $p\equiv 2 \ (mod \ 3) $}

\author{Mansoor Saburov$^1$, Mohd Ali Khameini Ahmad$^1$}

\address{$^1$Department of Computational \& Theoretical Sciences, Faculty of Science, International Islamic University Malaysia, 
P.O. Box, 25200, Kuantan, Pahang, MALAYSIA
}

\ead{msaburov@iium.edu.my,  msaburov@gmail.com, khameini.ahmad@gmail.com}

\begin{abstract} 
In this paper, we study the dynamics of the Potts-Bethe mapping associated with the $p-$adic $q-$state Potts model over the Cayley tree of order three. Namely, we establish the regularity of the Potts-Bethe mapping for the case $p\equiv 2 \ (mod \ 3)$ with $p\geq5$.  	
\end{abstract}

\section{Introduction}

The effective method for investigation of phase transitions in statistical mechanics is to study the distribution of zeros of the grand partition function which is considered as a function of a complex magnetic field. In 1952, Lee and Yang, in their famous papers (see \cite{LeeYang,YangLee}), proved \textit{the circle theorem} which states that the zeros of the grand partition function of Ising ferromagnet lie on the unit circle in the complex magnetic plane (\textit{Lee--Yang zeros}). They proposed a mechanism for the occurrence of phase transitions in the thermodynamic limit and showed that the distribution of the zeros of a model determines its critical behavior. In 1964, Fisher, in his paper \cite{Fisher}, initiated the study of partition function zeros in the complex temperature plane (\textit{Fisher zeros}). These methods are then extended to other type of interactions and found a wide range of applications \cite{Arndt,AHR,IPZ1983,LuWu1998,LuWu2000,Monroe1991,PF1999,Ruelle}. The density of Lee--Yang zeros of the grand partition function of the Ising model can be determined experimentally from the field dependence of the isothermal magnetization data (see \cite{Binek}). The fractal structure of Fisher zeros in $q-$state Potts model (which is a multi-state generalization of the Ising model) on the diamond lattice was obtained by Derrida, de Seze and Itzykson \cite{DDI1983}. They showed that the Fisher zeros in hierarchical lattice models are just the Julia set corresponding to the renormalization group transformation. Bosco and Goulard Rosa \cite{BoscoRosa1987} (see also \cite{Monroe1995})  calculated the fractal dimension of the Julia set associated with the Lee--Yang zeros of the ferromagnetic Ising Model on the Cayley tree. By means of the dynamical system approach, Monroe, in series of papers \cite{Monroe1991,Monroe1992,Monroe1994,Monroe1996,Monroe2001}, studied the Julia set, the period doubling cascades, and the chaos behavior of the Potts-Bethe mapping associated with the $q-$state Potts model on the Bethe lattice (the Cayley tree).

The $p-$adic counterpart of the theory of Gibbs measures on Cayley trees has been also initiated in the papers \cite{GMR,MR1,MR2}. The existence of $p-$adic Gibbs measures as well as the phase transition for some $p-$adic lattice models were established in \cite{M2,M3,MH2013,FMMSOK2015,FMMSOK2016,RUKO2013}. Recently \cite{RUKO2015,SMAA2015c,SMAA2015d}, all translation-invariant $p-$adic Gibbs measures of the $p-$adic Potts model on the Cayley tree of order two and three were described by studying allocation of roots of quadratic and cubic equations over some domains of the $p-$adic fields. However, in the $p-$adic case, due to lack of the convex structure of the set of $p-$adic (quasi) Gibbs measures, it was quite difficult to constitute a phase transition with some features of the set of $p-$adic (quasi) Gibbs measures. The rigorous mathematical foundation of the theory of Gibbs measures on Cayley trees was presented in the book \cite{RUBook} (for survey see \cite{RUSurvey}).

Unlike the real case \cite{CKURRK}, the set of $p-$adic Gibbs measures of $p-$adic lattice models on the Cayley tree has a complex structure in a sense that it is strongly tied up with a Diophantine problem over the $p-$adic numbers fields. The rise of the order of the Cayley tree makes difficult to study the corresponding Diophantine problem over the $p-$adic numbers fields. In general, because of different topological structures, the same Diophantine problem may have different solutions from the field of $p-$adic numbers to the field of real numbers. The most frequently asked question is whether a root of a polynomial equation belongs to domains $\mathbb{Z}_p^{*}, \ \mathbb{Z}_p\setminus\mathbb{Z}_p^{*}, \ \mathbb{Z}_p, \ \mathbb{Q}_p\setminus\mathbb{Z}_p^{*}, \ \mathbb{Q}_p\setminus\left(\mathbb{Z}_p\setminus\mathbb{Z}_p^{*}\right), \ \mathbb{Q}_p\setminus\mathbb{Z}_p, \ \mathbb{Q}_p, \ \mathbb{S}_{p^{m}}(0)$ or not.  To the best of our knowledge, the little attention was given to this problem in the literature. Recently, this problem was studied for monomial equations \cite{FMMS}, quadratic equations \cite{SMAA2015c}, depressed cubic equations for primes $p>3$ in \cite{FMBOMSKM,FMBOMS,SMAA2016b} and for primes $p=2,3$ in \cite{SMAA2014,SMAA2015a,SMAA2015b} and bi-quadratic equations \cite{SMAA2016a}.

In this paper, we study the dynamics of the Potts-Bethe mapping associated with the $p-$adic $q-$state Potts model on the Cayley tree of order three. The paper is organized as follows. In Section 2, we first recall some basic concepts and notions in $p-$adic statistical mechanics. In Section 3, we show that the description of all translation invariant $p-$adic Gibbs measures can lead to the investigation of fixed points of the Potts-Bethe mapping in some domains of $\mathbb{Q}_p$. Finally, in Section 4, we show regularity of the Potts-Bethe mapping for the case $p\equiv 2 \ (mod \ 3)$. The case $p\equiv 1 \ (mod \ 3) $ would be studied elsewhere. Throughout this paper, we always assume that $p\geq5$ unless otherwise mentioned.

\section{Preliminaries}

\subsection{The $p-$adic numbers}

For a fixed prime $p$, the field $\mathbb{Q}_p$ of $p-$adic
numbers is a completion of the rational numbers $\mathbb{Q}$ with
respect to the non-Archimedean norm $|\cdot|_p:\mathbb{Q}\to\br$ given by
\begin{eqnarray*}
|x|_p=\left\{
\begin{array}{c}
p^{-k} \ x\neq 0,\\
0,\ \quad x=0,
\end{array}
\right.
\end{eqnarray*}
where $x=p^k\frac{m}{n}$ with $k,m\in\mathbb{Z},$ $n\in\bn$,
$(m,p)=(n,p)=1$. The number $k$ is called \textit{a $p-$order} of $x$
and it is denoted by $ord_p(x)=k.$

Any $p-$adic number $x\in\mathbb{Q}_p$ can be uniquely represented in the
canonical form
$
x=p^{ord_p(x)}\left(x_0+x_1\cdot p +x_2\cdot p^2+\cdots \right)
$
where $x_0\in \{1,2,\cdots p-1\}$ and $x_i \in \{ 0,1,2,\cdots p-1 \}$ for
$i\in \mathbb{N}$. We respectively denote the set of all {\it $p-$adic integers} and {\it $p-$adic units} of
$\mathbb{Q}_p$ by
$\mathbb{Z}_p=\{x\in\mathbb{Q}_{p}: |x|_p\leq1\}$ and  $\mathbb{Z}_p^{*}=\{x\in\bq_{p}: |x|_p=1\}.$
Any $p-$adic unit $x\in\bz_p^{*}$ has the unique canonical form
$x=x_0+x_1\cdot p+x_2\cdot p^2+\cdots$
where $x_0\in \{1,2,\cdots p-1\}$ and $x_i\in\{0,1,2,\cdots p-1\}$ for $i\in\bn.$
Any nonzero $p-$adic number $x\in\bq_p$ has the unique representation $x =\frac{x^{*}}{|x|_p}$, where $x^{*}\in\bz_p^{*}$ (see \cite{BorShaf,NK}). We set $\mathbb{B}(a,r)=\{x\in \bq_p : |x-a|_p< r\}$ for $a\in \bq_p$ and $r>0$. 

The $p$-adic logarithm $\log_p(\cdot) :\mathbb{B}(1,1)\to \mathbb{B}(0,1)$ is defined as follows
$$
\log_p(x)=\log_p(1+(x-1))=\sum_{n=1}^{\infty}(-1)^{n+1}\frac{(x-1)^n}{n}.
$$
The $p$-adic exponential $\exp_p(\cdot): \mathbb{B}(0,p^{-1/(p-1)})\to \mathbb{B}(1,1)$ is defined as follows
$$
\exp_p(x)=\sum_{n=0}^{\infty}\frac{x^n}{n!}.
$$

\begin{lemma}[\cite{NK}]\label{21}  Let $x\in
	\mathbb{B}(0,p^{-1/(p-1)}).$ Then we have that  $$ |\exp_p(x)|_p=1,\ \ 
	|\exp_p(x)-1|_p=|x|_p<1, \ \ |\log_p(1+x)|_p=|x|_p<p^{-1/(p-1)},$$
	$$ \log_p(\exp_p(x))=x, \ \ \exp_p(\log_p(1+x))=1+x. $$
\end{lemma}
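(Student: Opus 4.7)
The plan is to exploit Legendre's formula $\mathrm{ord}_p(n!)=(n-s_p(n))/(p-1)$, where $s_p(n)$ denotes the sum of the base-$p$ digits of $n$, together with the strengthening of the non-Archimedean triangle inequality which becomes an equality whenever two summands have distinct absolute values. Each of the five assertions will follow by comparing the norm of the leading term of the relevant series with the norms of all the remaining terms.

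First I would treat the exponential. Each term of the defining series satisfies
$$
\left|\frac{x^n}{n!}\right|_p=|x|_p^{n}\cdot p^{(n-s_p(n))/(p-1)}=\bigl(|x|_p\cdot p^{1/(p-1)}\bigr)^{n}\cdot p^{-s_p(n)/(p-1)}.
$$
Under the hypothesis $|x|_p<p^{-1/(p-1)}$ the base $|x|_p\cdot p^{1/(p-1)}$ is strictly less than $1$, so the series converges and, for every $n\geq 2$, the $n$-th term is strictly smaller than $|x|_p$, the contribution coming from $n=1$. The strong triangle inequality then promotes $|\exp_p(x)-1|_p\leq|x|_p$ to the claimed equality, and writing $\exp_p(x)=1+(\exp_p(x)-1)$ together with $|x|_p<1$ yields $|\exp_p(x)|_p=1$.

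Next I would treat the logarithm by the same method, starting from $|x^n/n|_p=|x|_p^{n}\cdot p^{\mathrm{ord}_p(n)}$. The key technical step, and the main obstacle, is to verify that this quantity is strictly smaller than $|x|_p$ for every $n\geq 2$ under the same hypothesis. Using the crude bound $p^{\mathrm{ord}_p(n)}\leq n$, the requirement reduces to $n\,|x|_p^{n-1}<1$; the binding case is $n=p$, where the hypothesis $|x|_p<p^{-1/(p-1)}$ is exactly what forces $|x|_p^{p-1}<p^{-1}$, and the remaining cases are easier because $\mathrm{ord}_p(n)$ is typically much smaller than $\log_p n$. Another application of the strong triangle inequality then gives $|\log_p(1+x)|_p=|x|_p$.

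Finally, the two composition identities are formal power series identities over $\mathbb{Q}$, hence over $\mathbb{Q}_p$. The estimates above ensure that the inner function maps into the disk of convergence of the outer one: $|\exp_p(x)-1|_p=|x|_p<1$ places $\exp_p(x)$ in the domain of $\log_p$, and $|\log_p(1+x)|_p=|x|_p<p^{-1/(p-1)}$ places $\log_p(1+x)$ in the domain of $\exp_p$. A standard double-series rearrangement, justified by the absolute (ultrametric) convergence of both series, then promotes the formal identities $\log_p(\exp_p(x))=x$ and $\exp_p(\log_p(1+x))=1+x$ to honest equalities in $\mathbb{Q}_p$.
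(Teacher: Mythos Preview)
The paper does not prove this lemma at all: it is quoted verbatim from Koblitz's textbook (reference~\cite{NK}) and used as a black box. So there is no ``paper's own proof'' to compare against.

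Your argument is the standard one and is correct in outline. Two small comments on exposition. First, in the logarithm step your sentence mixing the crude bound $p^{\mathrm{ord}_p(n)}\leq n$ with the observation that the binding case is $n=p$ is a bit tangled: the crude bound reduces to $n\,|x|_p^{n-1}<1$, but the clean way to see the general case is to note that for $n=p^k m$ with $(m,p)=1$ one has $n-1\geq p^k-1\geq k(p-1)$, whence $|x|_p^{n-1}\leq |x|_p^{k(p-1)}<p^{-k}=p^{-\mathrm{ord}_p(n)}$; equality $n-1=k(p-1)$ occurs only at $n=p$, which is indeed the binding case. Second, in the composition step it is worth stating explicitly that in an ultrametric field a double series may be freely rearranged as soon as its general term tends to zero, which is what licenses passing from the formal power series identity to the analytic one. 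With those clarifications the proof is complete.
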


Let $\ce_p=\{ x\in\mathbb{Q}_p:   |x-1|_p<p^{-1/(p-1)}\}.$ Obviously, $\ce_p=\{ x \in \mathbb{Z}_p^{*} : |x-1|_p<1\}=\mathbb{B}(1,1)$ whenever $p\geq3.$ 

\subsection{The $p-$adic measure}

Let $(X,\cb)$ be a measurable space, where $\cb$ is an algebra of
subsets $X$. A function $\m:\cb\to \mathbb{Q}_p$ is said to be a {\it
	$p$-adic measure} if for any $A_1,\dots,A_n\subset\cb$ such that
$A_i\cap A_j=\emptyset$ ($i\neq j$) the equality holds
$$
\mu\bigg(\bigcup_{j=1}^{n} A_j\bigg)=\sum_{j=1}^{n}\mu(A_j).
$$

A $p-$adic measure is called a {\it probability measure} if
$\mu(X)=1$.  A $p-$adic
probability measure $\m$ is called {\it bounded} if
$\sup\{|\m(A)|_p : A\in \cb\}<\infty $. The boundedness condition of $p-$adic measures provides an integration theory. The reader may refer to \cite{Kh1994,Kh2009,Kh1999} for more detailed information about $p-$adic measures.

\subsection{The Cayley tree}

Let $\Gamma^k_+ = (V,L)$ be a semi-infinite Cayley tree of order
$k\geq 1$ with the root $x^0$ (whose each vertex has exactly $k+1$
edges except for the root $x^0$ which has $k$ edges). Let $V$ be
a set of vertices and $L$ be a set of edges. The vertices $x$
and $y$ are called {\it nearest neighbors}, denoted by
$l=\langle{x,y}\rangle,$ if there exists an edge $l\in L$ connecting them. A collection of
the pairs $\langle{x,x_1}\rangle,\cdots,\langle{x_{d-1},y}\rangle$ is called {\it a path} between vertices $x$ and $y$. The distance $d(x,y)$ between $x,y\in V$ on
the Cayley tree is the length of the shortest path between $x$ and $y$. Let 
$$
W_{n}=\left\{ x\in V: d(x,x^{0})=n\right\}, \quad
V_n=\overset{n}{\underset{m=1}{\bigcup}}W_{m}, \quad L_{n}=\left\{
<x,y>\in L: x,y\in V_{n}\right\}.
$$
The set of direct successors of $x$ is defined by
$$
S(x)=\left\{ y\in W_{n+1}:d(x,y)=1\right\}, \ \ \forall\  x\in W_{n}.
$$

We now introduce a coordinate structure in $V$.
Every vertex $x\neq x^{0}$ has the coordinate
$(i_1,\cdots,i_n)$ where $i_m\in\{1,\dots,k\},\ 1\leq m\leq n$ and
the vertex $x^0$ has the coordinate $(\emptyset)$. More precisely, the symbol
$(\emptyset)$ constitutes level $0$ and the sites $i_1,\dots,i_n$ form
level $n$ of the lattice. In this case, for any $x\in V,\
x=(i_1,\cdots,i_n)$ we have that
$$
S(x)=\{(x,i): 1\leq i\leq k\},
$$
here $(x,i)$ means that $(i_1,\cdots,i_n,i)$. Let us define a binary operation $\circ:V\times V\to V$
as follows: for any two elements $x=(i_1,\cdots,i_n)$ and $y=(j_1,\cdots,j_m)$ we define
$$
x\circ y=(i_1,\cdots,i_n)\circ(j_1,\cdots,j_m)=(i_1,\cdots,i_n,j_1,\cdots,j_m)
$$
and
$$
y\circ x=(j_1,\cdots,j_m)\circ(i_1,\cdots,i_n)=(j_1,\cdots,j_m,i_1,\cdots,i_n).
$$
Then  $(V,\circ)$ is a noncommutative semigroup with a unit $x^0$.
Now, we can define a translation $\tau_g: V\to V$ for $ g\in V$
as follows $\tau_g(x)=g\circ x.$
Consequently, by means of $\tau_g,$ we define a translation $\tilde\tau_g:L\to L$ as $
\tilde\tau_g(\langle{x,y}\rangle)=\langle{\tau_g(x),\tau_g(y)}\rangle.$

Let $G\subset V$ be a sub-semigroup of $V$ and $h:L\to\mathbb{Q}_p$ be a function.
We say that $h$ is $G-${\it periodic} if $h(\tilde\tau_g(l))=h(l)$ for all $g\in G$ and $l\in L$.
Any $V-$periodic function is called {\it translation-invariant}.

\subsection{The $p-$adic Potts model}

Let $\Phi=\{1,2,\cdots, q\}$ be a finite set.
A configuration (resp. a finite volume configuration, a boundary configuration) is a function $\sigma:V\to \Phi$ (resp. $\sigma_n:V_n\to \Phi,$ $\sigma^{(n)}:W_n\to \Phi$). We denote by $\Omega$ (resp. $\Omega_{V_n}$, $\Omega_{W_n}$) a set of all configurations (resp. all finite volume configurations, all boundary configurations). For given configurations $\sigma_{n-1}\in\Omega_{V_{n-1}}$ and $\sigma^{(n)}\in\Omega_{W_n}$, we define their concatenation to be a finite volume configuration $\sigma_{n-1}\vee\sigma^{(n)}\in\Omega_{V_n}$  such that
$$
\left(\sigma_{n-1}\vee\sigma^{(n)}\right)(v)=
\begin{cases}
\sigma_{n-1}(v) & \text{if} \ \ v\in V_{n-1} \\
\sigma^{(n)}(v) & \text{if} \ \ v\in W_n
\end{cases}.
$$

The Hamiltonian of \textit{a $p-$adic Potts model} with the spin value set $\Phi=\{1,2,\cdots q\}$ on the finite volume configuration is defined as follows
\begin{eqnarray*}\label{Hamiltonian}
H_n(\sigma_n)=J\sum\limits_{\langle x,y\rangle\in L_n}\delta_{\sigma_n(x)\sigma_n(y)},
\end{eqnarray*}
for all $\sigma_n\in \Omega_{V_n}$ and $n\in\mathbb{N}$ where $J\in \mathbb{B}(0, p^{-1/(p-1)})$ is a coupling constant, $\langle x,y\rangle$ stands for nearest neighbor vertices, and $\delta$ is Kronecker's symbol. 

\subsection{The $p-$adic Gibbs measure}

Let us present a construction of a $p-$adic Gibbs measure of the $p-$adic Potts model with $q-$spin values. We define a $p-$adic measure $\mu^{(n)}_{\mathbf{\tilde{h}}}:\Omega_{V_n}\to\mathbb{Q}_p$ associated with a boundary function $\mathbf{\tilde{h}}:V\to\mathbb{Q}_p^{q},$ $\mathbf{\tilde{h}}(x)=\left({\tilde{h}}^{(1)}_{x},\dots, {\tilde{h}}^{(q)}_{x}\right),$ $x\in V$ as follows
\begin{eqnarray}\label{mu_h^n}
\mu_{\mathbf{\tilde{h}}}^{(n)}(\sigma_n)
=\frac{1}{\mathcal{Z}^{(n)}_{\mathbf{\tilde{h}}}}\exp_p\left\{H_n(\sigma_n)+\sum_{x\in W_n}{\tilde{h}}^{(\sigma_n(x))}_{x}\right\}
\end{eqnarray}
for all $\sigma_n\in \Omega_{V_n}, \ n\in\mathbb{N}$ where $\exp_p(\cdot):\mathbb{B}(0, p^{-1/(p-1)})\to \mathbb{B}(1, 1)$ is the $p-$adic exponential function and $\mathcal{Z}^{(n)}_{\mathbf{\tilde{h}}}$ is {\it a partition function} defined by
\begin{eqnarray*}\label{ZN}
	\mathcal{Z}^{(n)}_{\mathbf{\tilde{h}}}=\sum_{\sigma_n\in\Omega_{V_n}}\exp_p\left\{H_n(\sigma_n)+\sum_{x\in W_n}{\tilde{h}}^{(\sigma_n(x))}_{x}\right\}
\end{eqnarray*}
for all $n\in\mathbb{N}.$ We always assume that $\mathbf{\tilde{h}}(x)\in\mathbb{B}(0, p^{-1/(p-1)})^{\times q}$ for any $x\in V.$

The $p-$adic measures \eqref{mu_h^n} are called \textit{compatible} if one has that
\begin{equation}\label{compatibility}
\sum_{\sigma^{(n)}\in \Omega_{W_n}}\mu_{\mathbf{\tilde{h}}}^{(n)}(\sigma_{n-1}\vee \sigma^{(n)})=\mu_{\mathbf{\tilde{h}}}^{(n-1)}(\sigma_{n-1})
\end{equation}
for all $\sigma_{n-1}\in \Omega_{V_{n-1}}$ and $n\in \mathbb{N}$.

Due to the Kolmogorov extension theorem of the $p-$adic measures \eqref{mu_h^n} (see \cite{GMR,Kh2009}), there exists a unique $p-$adic measure $\mu_{\mathbf{\tilde{h}}}:\Omega\to\mathbb{Q}_p$ such that
$$\mu_{\mathbf{\tilde{h}}}(\{\sigma\mid_{V_n}=\sigma_n\})=\mu_{\mathbf{\tilde{h}}}^{(n)}(\sigma_n), \quad \forall \ \sigma_n\in \Omega_{V_n}, \ \forall\  n\in\mathbb{N}.$$ 
This uniquely extended measure $\mu_{\mathbf{\tilde{h}}}:\Omega\to\mathbb{Q}_p$ is called \textit{a $p-$adic Gibbs measure}. The following theorem describes the condition on the boundary function $\mathbf{{\tilde{h}}}:V\to\mathbb{Q}_p^{q}$ in which the compatibility condition \eqref{compatibility}  is satisfied.

\begin{theorem}[\cite{MR1,MR2}]\label{Equationforh} Let $\mathbf{{\tilde{h}}}:V\to\mathbb{Q}_p^{q},$ $\mathbf{\tilde{h}}(x)=\left(\tilde{h}^{(1)}_{x},\cdots, \tilde{h}^{(q)}_{x}\right)$ be a given boundary function and $\mathbf{{{h}}}:V\to\mathbb{Q}_p^{q-1},$ $\mathbf{h}(x)=\left(h^{(1)}_{x},\cdots, h^{(q-1)}_{x}\right)$ be a function defined as $h^{(i)}_{x}={\tilde{h}}^{(i)}_{x}-{\tilde{h}}^{(q)}_{x}$ for all $ i=\overline{1,q-1}.$ Then the $p-$adic probability distributions $\left\{\mu_{\mathbf{\tilde{h}}}^{(n)}\right\}_{n\in\mathbb{N}}$  are compatible if and only if one has that
	\begin{equation}\label{systemofequationforh}
	\mathbf{{h}}(x)=\sum_{y\in S(x)}\mathbf{F}(\mathbf{h}(y)), \quad \forall \ x\in V\setminus\{x^0\},
	\end{equation}
	where $S(x)$ is the set of direct successors of $x$ and the function $\mathbf{F}:\mathbb{Q}_p^{q-1}\to \mathbb{Q}_p^{q-1},$ $\mathbf{F}(\mathbf{h})=(F_1,\cdots,F_{q-1})$ for $ \mathbf{h}=(h_1, \cdots,h_{q-1})$ is defined as follows
	$$F_i=\log_p\left({(\theta-1)\exp_p(h_i)+\sum_{j=1}^{q-1}\exp_p(h_j)+1\over \theta+ \sum_{j=1}^{q-1}\exp_p(h_j)}\right), \quad \theta=\exp_p(J).$$
\end{theorem}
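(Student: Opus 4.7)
The plan is to derive \eqref{systemofequationforh} by unpacking the compatibility condition \eqref{compatibility} using the explicit form \eqref{mu_h^n} and the multiplicative identity $\exp_p(a+b)=\exp_p(a)\exp_p(b)$ on $\mathbb{B}(0,p^{-1/(p-1)})$ from Lemma \ref{21}.

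First I would split the Hamiltonian along the last level,
\[
H_n(\sigma_{n-1}\vee\sigma^{(n)}) = H_{n-1}(\sigma_{n-1}) + J\sum_{x\in W_{n-1}}\sum_{y\in S(x)}\delta_{\sigma_{n-1}(x)\,\sigma^{(n)}(y)},
\]
so that the sum over $\sigma^{(n)}\in\Omega_{W_n}$ factorises as a product over directed edges $x\to y$ with $y\in S(x)$, since each $y\in W_n$ has a unique parent in $W_{n-1}$. Carrying out this sum gives
\[
\sum_{\sigma^{(n)}\in\Omega_{W_n}}\mu_{\mathbf{\tilde{h}}}^{(n)}(\sigma_{n-1}\vee\sigma^{(n)})=\frac{\exp_p H_{n-1}(\sigma_{n-1})}{\mathcal{Z}^{(n)}_{\mathbf{\tilde{h}}}}\prod_{x\in W_{n-1}}\prod_{y\in S(x)}A_{\sigma_{n-1}(x)}(y),
\]
where, with $\theta=\exp_p(J)$,
\[
A_i(y)=\theta\exp_p(\tilde{h}^{(i)}_y)+\sum_{k\neq i}\exp_p(\tilde{h}^{(k)}_y).
\]

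Equating this with $\mu_{\mathbf{\tilde{h}}}^{(n-1)}(\sigma_{n-1})$ and comparing two configurations that differ only at a single $x\in W_{n-1}$ (value $j$ versus value $q$), the partition-function ratio $\mathcal{Z}^{(n)}_{\mathbf{\tilde{h}}}/\mathcal{Z}^{(n-1)}_{\mathbf{\tilde{h}}}$ and every factor attached to vertices other than $x$ cancel, leaving
\[
\exp_p\bigl(\tilde{h}^{(j)}_x - \tilde{h}^{(q)}_x\bigr)=\prod_{y\in S(x)}\frac{A_j(y)}{A_q(y)},\qquad j=1,\dots,q-1.
\]
Dividing numerator and denominator by $\exp_p(\tilde{h}^{(q)}_y)$ converts $A_j(y)/A_q(y)$ exactly into the argument of $\log_p$ defining $F_j$, and applying $\log_p$ termwise gives $\mathbf{h}(x)=\sum_{y\in S(x)}\mathbf{F}(\mathbf{h}(y))$. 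For the converse, the same chain of equalities runs backwards: assuming \eqref{systemofequationforh}, the marginal of $\mu_{\mathbf{\tilde{h}}}^{(n)}$ on $\Omega_{V_{n-1}}$ is proportional to $\mu_{\mathbf{\tilde{h}}}^{(n-1)}$, and since both have total mass $1$ they coincide, i.e. the Kolmogorov compatibility \eqref{compatibility} holds.

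The main obstacle is not the algebra but the $p$-adic analyticity bookkeeping: at each step one must check that the arguments of $\exp_p$ remain in $\mathbb{B}(0,p^{-1/(p-1)})$ and those of $\log_p$ in $\mathcal{E}_p$, so that the identities of Lemma \ref{21} apply. The standing assumptions $J\in\mathbb{B}(0,p^{-1/(p-1)})$ and $\mathbf{\tilde{h}}(x)\in\mathbb{B}(0,p^{-1/(p-1)})^{\times q}$ yield $|\theta-1|_p<p^{-1/(p-1)}$ and $|\exp_p(\tilde{h}^{(i)}_y)-1|_p<p^{-1/(p-1)}$. A short ultrametric computation using $A_j(y)-A_q(y)=(\theta-1)\bigl(\exp_p(\tilde{h}^{(j)}_y)-\exp_p(\tilde{h}^{(q)}_y)\bigr)$ then shows $|A_j(y)/A_q(y)-1|_p<p^{-1/(p-1)}$, which keeps us inside the domain of $\log_p$; the only implicit hypothesis is that $A_q(y)$ is a $p$-adic unit, which follows from $p\geq 5$ together with a mild coprimality between $q$ and $p$ that must be recorded.
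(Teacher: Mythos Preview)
The paper does not supply its own proof of this theorem: it is stated with the citation \cite{MR1,MR2} and used as a black box, so there is no in-paper argument to compare your proposal against. Your outline is the standard derivation of the tree recursion (split $H_n$ along the last level, factorise the sum over $\Omega_{W_n}$ using the tree structure, take ratios at a single boundary vertex to eliminate the partition functions, then pass to $\log_p$), and this is exactly the route taken in the cited Mukhamedov--Rozikov papers. The algebra you record is correct, including the identity $A_j(y)-A_q(y)=(\theta-1)\bigl(\exp_p(\tilde h^{(j)}_y)-\exp_p(\tilde h^{(q)}_y)\bigr)$.

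One caution on your final paragraph: you flag that $A_q(y)$ must be a $p$-adic unit and attribute this to ``a mild coprimality between $q$ and $p$''. That is indeed the delicate point, but note that the later sections of this very paper work under the hypothesis $|q|_p<1$, i.e.\ $p\mid q$, so the coprimality you invoke is \emph{not} available in the regime of interest. What actually rescues the argument in \cite{MR1,MR2} is that one does not need $A_q(y)$ itself to be a unit; one only needs the ratio $A_j(y)/A_q(y)$ to lie in $\mathcal{E}_p$, and for this it suffices that $|A_j(y)-A_q(y)|_p<|A_q(y)|_p$ together with the stronger bound $|A_j(y)-A_q(y)|_p\le|\theta-1|_p\cdot p^{-1/(p-1)}$. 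In the cited references this is controlled by an a priori smallness assumption on $|\theta-1|_p$ relative to $|q|_p$ (or an equivalent hypothesis), not by $(p,q)=1$. If you want your write-up to be self-contained you should state that hypothesis explicitly rather than appeal to coprimality.
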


\subsection{The translation-invariant $p-$adic Gibbs measure} A $p-$adic Gibbs measure is translation-invariant if and only if the boundary function $\mathbf{{\tilde{h}}}:V\to\mathbb{Q}_p^{q}$ is constant, i.e., $\mathbf{{\tilde{h}}}(x)=\mathbf{\tilde{h}}$ for any $x\in V.$ In this case, the condition \eqref{systemofequationforh} takes the form $\mathbf{h}=k\mathbf{F}(\mathbf{h})$ or equivalently
$$
h_i=\log_p\left({(\theta-1)\exp_p(h_i)+\sum_{j=1}^{q-1}\exp_p(h_j)+1\over \theta+ \sum_{j=1}^{q-1}\exp_p(h_j)}\right)^k, \quad \forall \ i=\overline{1,q-1}.
$$ 
Let $\mathbf{z}=(z_1,\cdots,z_{q-1})\in\mathcal{E}_p^{q-1}$ such that $z_i=\exp_p(h_i)$ for any $i=\overline{1,q-1}.$ In this case, we write $\mathbf{z}=\exp_p(\mathbf{h}).$ Hence, we obtain from the last system of equations
\begin{eqnarray*}
z_i=\left({(\theta-1)z_i+\sum_{j=1}^{q-1}z_j+1\over \theta+ \sum_{j=1}^{q-1}z_j}\right)^k, \quad \forall \ i=\overline{1,q-1}.
\end{eqnarray*}

Consequently, we have the following result.

\begin{theorem}[\cite{MR1,MR2}]\label{existenceTIpGM}
	There exists a TIpGM  $\mu_{\mathbf{\tilde{h}}}:\Omega\to\mathbb{Q}_p$ associated with a boundary function $\mathbf{{\tilde{h}}}:V\to\mathbb{Q}_p^{q},$ $\mathbf{\tilde{h}}(x)=\mathbf{\tilde{h}}=(\tilde{h}_1,\cdots, \tilde{h}_{q})$ for any $x\in V$ if and only if $\mathbf{z}=\exp_p(\mathbf{h})\in\mathcal{E}_p^{q-1}$ is a solution of the following system of equations 
	\begin{equation}\label{equationwrtz}
	z_i=\left({(\theta-1)z_i+\sum_{j=1}^{q-1}z_j+1\over \theta+ \sum_{j=1}^{q-1}z_j}\right)^k,\quad \ i=\overline{1,q-1}.
	\end{equation}
	where $\mathbf{z}=(z_1,\cdots,z_{q-1}), \mathbf{h}=(h_1,\cdots, h_{q-1})$ such that $h_i=\tilde{h}_i-\tilde{h}_q, \forall i=\overline{1,q-1}.$
\end{theorem}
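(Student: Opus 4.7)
The plan is to derive this theorem as a direct specialization of Theorem~\ref{Equationforh} to constant boundary functions, followed by the change of variables $z_i = \exp_p(h_i)$.

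First, I would unpack translation invariance. The $p$-adic Gibbs measure $\mu_{\mathbf{\tilde{h}}}$ is translation-invariant exactly when $\mathbf{\tilde{h}}(x)\equiv\mathbf{\tilde{h}}$ is constant in $x$, which by the relation $h^{(i)}_x = \tilde{h}^{(i)}_x - \tilde{h}^{(q)}_x$ forces $\mathbf{h}(x)\equiv\mathbf{h}$ to be constant as well. Substituting this into the compatibility equation \eqref{systemofequationforh} and using that the Cayley tree of order $k$ has $|S(x)|=k$ for every non-root $x$, the system collapses to $\mathbf{h} = k\mathbf{F}(\mathbf{h})$; that is, for each $i\in\{1,\dots,q-1\}$,
$$h_i = \log_p\left(\frac{(\theta-1)\exp_p(h_i) + \sum_{j=1}^{q-1}\exp_p(h_j) + 1}{\theta + \sum_{j=1}^{q-1}\exp_p(h_j)}\right)^{k}.$$

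Next, I would pass to the $z$-variable. Since $h_i \in \mathbb{B}(0, p^{-1/(p-1)})$ by the standing assumption on admissible boundary functions, Lemma~\ref{21} gives $z_i := \exp_p(h_i) \in \mathcal{E}_p$ together with the identities $\exp_p \circ \log_p = \mathrm{id}$ on $\mathbb{B}(1,1)$ and $\log_p \circ \exp_p = \mathrm{id}$ on $\mathbb{B}(0, p^{-1/(p-1)})$. Applying $\exp_p$ to both sides of the displayed equation and rewriting the right-hand side in terms of the $z_j$'s produces precisely \eqref{equationwrtz}. Conversely, given $\mathbf{z}\in\mathcal{E}_p^{q-1}$ solving \eqref{equationwrtz}, one defines $h_i := \log_p z_i \in \mathbb{B}(0, p^{-1/(p-1)})$, recovers $\mathbf{h} = k\mathbf{F}(\mathbf{h})$ by applying $\log_p$, and then invokes Theorem~\ref{Equationforh} to obtain a compatible family $\{\mu_{\mathbf{\tilde{h}}}^{(n)}\}$ whose Kolmogorov extension is the desired translation-invariant $p$-adic Gibbs measure; constancy of $\mathbf{\tilde{h}}$ follows from that of $\mathbf{h}$ (taking $\tilde{h}_q = 0$, say).

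The only technical point to verify is the domain bookkeeping. One must confirm that the ratio
$$A_i = \frac{(\theta-1)z_i + \sum_{j=1}^{q-1}z_j + 1}{\theta + \sum_{j=1}^{q-1}z_j}$$
lies in $\mathbb{B}(1,1)$, so that $\log_p A_i$ makes sense, and that $k\log_p A_i \in \mathbb{B}(0, p^{-1/(p-1)})$, so that the exponential recovers $z_i$. A direct calculation yields $A_i - 1 = (\theta-1)(z_i-1)/(\theta + \sum_{j=1}^{q-1} z_j)$, whose numerator has norm strictly less than one by Lemma~\ref{21}; controlling the denominator via the non-Archimedean triangle inequality (using $p \geq 5$) then secures the required estimate. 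This is the sole place where care is genuinely needed, and it is already implicit in the well-definedness of $\mathbf{F}$ in Theorem~\ref{Equationforh}, so no new difficulty arises beyond tracking norms through the substitution.
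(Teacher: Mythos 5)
Your proposal is correct and follows essentially the same route as the paper: the paper likewise obtains the theorem by specializing the compatibility condition \eqref{systemofequationforh} to a constant boundary function (so that it collapses to $\mathbf{h}=k\mathbf{F}(\mathbf{h})$ since $|S(x)|=k$) and then substituting $z_i=\exp_p(h_i)$ via Lemma~\ref{21}. Your extra paragraph on the norm bookkeeping for the ratio $A_i$ is a welcome refinement of a point the paper leaves implicit, but it does not change the argument.
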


\begin{remark}
	If $\theta=1$ then the system of equation \eqref{equationwrtz} has a unique solution $\mathbf{z}=(1,1,\cdots,1).$ In what follows, we always assume that $\theta\neq 1$.
\end{remark}

In the cases $k=2,3$ the description of all TIpGMs were given in the papers \cite{RUKO2015,SMAA2015d}.

\section{Translation Invariant $p-$adic Gibbs Measures}

The following theorem describes all TIpGMs of the Potts model with $q$ spin values on the Cayley tree of order three.

\begin{theorem}[Descriptions of TIpGMs, \cite{SMAA2015d}]\label{Descriptionq>3}
There exists a TIpGM $\mu_{\mathbf{\tilde{h}}}:\Omega\to\mathbb{Q}_p$ associated with a boundary function  $\mathbf{\tilde{h}}=(\tilde{h}_1,\cdots, \tilde{h}_{q})$ if and only if $\tilde{h}_j=\log_p(hz_j)$ for all $j=\overline{1,q-1}$ and $\tilde{h}_q=\log_p(h)$ where $h\in\mathcal{E}_p$ is any $p-$adic number and $\mathbf{z}=(z_1,\cdots z_{q-1})\in \mathcal{E}_p^{q-1}$ is defined either one of the following form
\begin{itemize}
	\item[(\textbf{A})] $\mathbf{z}=\left(1,\cdots,1\right);$
	\item[(\textbf{B})] $\mathbf{z}=\left(z,\cdots,z\right)$ where $z\in\mathcal{E}_p\setminus\{1\}$ is a root of the following cubic equation 
	\begin{multline*}
	\quad \quad \quad (q-1)^3z^3+\left(3(q-1)^2-(\theta-1)^2(\theta+3(q-1)-1)\right)z^2\\
	+\left(3(q-1)-(\theta-1)^2(\theta+2)\right)z+1=0;
	\end{multline*}
	\item[(\textbf{C})] $\mathbf{z}=\mathbf{e}_{\alpha_1}+z\mathbf{e}_{\alpha_2}$ with $|\alpha_i|=m_i, \ m_1+m_2=q-1$ such that  $z\in\mathcal{E}_p\setminus\{1\}$ is a root of the following cubic equation
	\begin{multline*}
	\quad \quad \quad	m_2^3z^3+\left[3m_2^2(m_1+1)-(\theta-1)^2(\theta+3m_2-1)\right]z^2\\
	+\left[3m_2(m_1+1)^2-(\theta-1)^2(\theta+3(m_1+1)-1)\right]z+(m_1+1)^3=0;
	\end{multline*}
	\item[(\textbf{D})] $\mathbf{z}=z_1\mathbf{e}_{\alpha_1}+z_2\mathbf{e}_{\alpha_2}$  with $|\alpha_i|=m_i, \ m_1+m_2=q-1$ such that  
	\begin{itemize}
		\item[(i)] 	If  $m_1\neq \frac{1-\theta}{3}$ then 
		$z_1=-\frac{(\theta-1+3m_2)z_2+\theta+2}{(\theta-1+3m_1)}\in\mathcal{E}_p\setminus\{1\}$
		and $z_2\in\mathcal{E}_p\setminus\{1\}$ is a root of the following cubic equation 
		\begin{multline*}
		\quad \quad \quad\quad \quad \left[(m_1-m_2)z+(m_1-1)\right]^3\\
		+(\theta-1+3m_1)^2\left[(\theta-1+3m_2)z^2+(\theta+2)z\right]=0;
		\end{multline*}
		\item[(ii)] If  $m_2\neq \frac{1-\theta}{3}$ then 
		$z_2=-\frac{(\theta-1+3m_1)z_1+\theta+2}{(\theta-1+3m_2)}\in\mathcal{E}_p\setminus\{1\}$
		and $z_1\in\mathcal{E}_p\setminus\{1\}$ is a root of the following cubic equation 
		\begin{multline*}
		\quad \quad \quad\quad \quad \left[(m_2-m_1)z+(m_2-1)\right]^3\\
		+(\theta-1+3m_2)^2\left[(\theta-1+3m_1)z^2+(\theta+2)z\right]=0;
		\end{multline*}
	\end{itemize}
	\item[(\textbf{E})] $\mathbf{z}=z_1\mathbf{e}_{\alpha_1}+z_2\mathbf{e}_{\alpha_2}+\mathbf{e}_{\alpha_3}$ with $|\alpha_i|=m_i, \ m_1+m_2+m_3=q-1$ such that 
	\begin{itemize}
		\item[(i)] If $m_1\neq \frac{1-\theta}{3}$ then 
		$z_1=-\frac{(\theta-1+3m_2)z_2+3m_3+\theta+2}{(\theta-1+3m_1)}\in\mathcal{E}_p\setminus\{1\}$
		and $z_2\in\mathcal{E}_p\setminus\{1\}$ is a root of the following cubic equation
		\begin{multline*}
		\quad \quad \quad\quad \quad \left[(m_1-m_2)z+(m_1-m_3-1)\right]^3+(\theta-1+ 3m_1)^2(\theta-1+3m_2)z^2\\
		+(\theta-1+3m_1)^2(3m_3+\theta+2)z=0;
		\end{multline*}
		\item[(ii)]  If  $m_2\neq \frac{1-\theta}{3}$ then 
		$z_2=-\frac{(\theta-1+3m_1)z_1+3m_3+\theta+2}{(\theta-1+3m_2)}\in\mathcal{E}_p\setminus\{1\}$
		and $z_1\in\mathcal{E}_p\setminus\{1\}$ is a root of the following cubic equation 
		\begin{multline*}
		\quad \quad \quad\quad \quad	\left[(m_2-m_1)z+(m_2-m_3-1)\right]^3+(\theta-1+3m_2)^2(\theta-1+3m_1)z^2\\
		+(\theta-1+3m_2)^2(3m_3+\theta+2)z=0;	
		\end{multline*}
		\item[(iii)] If $\theta=1-q$ and $m_1=m_2=m_3+1$ then either $z_1\in\mathcal{E}_p\setminus\{1\}$ or $z_2\in\mathcal{E}_p\setminus\{1\}$ is any $p-$adic number  so that the second one is a root of the cubic equation $(z_1+z_2+1)^3=27z_1z_2$.
	\end{itemize}
\end{itemize}
\end{theorem}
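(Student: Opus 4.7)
The plan is to invoke Theorem~\ref{existenceTIpGM} with $k=3$, which reduces the classification of TIpGMs to describing all $\mathbf{z}=(z_1,\ldots,z_{q-1})\in\mathcal{E}_p^{q-1}$ satisfying
\[
z_i=\left(\frac{(\theta-1)z_i+S+1}{\theta+S}\right)^{3},\qquad S=\sum_{j=1}^{q-1}z_j,
\]
for every $i=\overline{1,q-1}$. The parameter $h\in\mathcal{E}_p$ in the statement encodes the remaining gauge freedom in the choice of $\tilde h_q$: the Potts Hamiltonian is invariant under a common additive shift of the coordinates of $\mathbf{\tilde{h}}$, so once a solution $\mathbf{z}=\exp_p(\mathbf{h})$ is fixed, the identification $\tilde h_q=\log_p h$ and $\tilde h_j=\log_p(hz_j)$ for $j=\overline{1,q-1}$ exhausts all admissible boundary functions.

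The structural observation is that, viewing $S$ as a parameter, each coordinate $z_i$ must independently satisfy a single cubic in one variable. A direct calculation shows that this cubic factors as
\[
(z_i-1)\bigl[(\theta-1)^{3}z_i^{2}+(\theta-1)^{2}(\theta+3S+2)\,z_i-(S+1)^{3}\bigr]=0.
\]
Hence every coordinate of a solution takes at most three distinct values: $1$ and the two roots of the quadratic factor. Splitting the index set $\{1,\ldots,q-1\}$ into subsets $\alpha_1,\alpha_2,\alpha_3$ of sizes $m_1,m_2,m_3$ according to which of these values $z_i$ takes yields precisely the five configurations (A)--(E).

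In each configuration the self-consistency $S=\sum z_j$ turns the quadratic factor into a polynomial equation in the free unknowns. Case (A) is trivial; case (B), with $S=(q-1)z$, produces the stated cubic by direct substitution; case (C), with $S=m_1+m_2 z$, does likewise after clearing denominators. For (D) and (E), both non-trivial values $z_1,z_2$ are roots of the same quadratic, so Vieta's sum relation combined with the expression for $S$ yields the affine constraint
\[
(\theta-1+3m_1)z_1+(\theta-1+3m_2)z_2+(\theta+2+3m_3)=0
\]
(with $m_3=0$ in case (D)). When $m_1\neq(1-\theta)/3$, solving this relation for $z_1$ in terms of $z_2$ and substituting into Vieta's product relation $(\theta-1)^{3}z_1z_2=-(S+1)^{3}$ produces the cubic displayed in (D)(i) or (E)(i); interchanging $z_1$ and $z_2$ gives the symmetric parameterization (ii). The exceptional sub-case (E)(iii) is forced precisely when both coefficients $\theta-1+3m_i$ vanish simultaneously, which compels $\theta=1-q$ together with $m_1=m_2=m_3+1$; here the affine constraint collapses and the product relation simplifies to $(z_1+z_2+1)^{3}=27z_1z_2$.

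The main difficulty lies in the algebraic bookkeeping for cases (D) and (E): because $S$ itself depends linearly on the unknowns $z_1,z_2$, substituting the affine Vieta relation into the cubic product relation produces bulky expressions whose patient simplification into the clean cubics displayed in the theorem, together with the symmetrization into the two parameterizations (i) and (ii), is the heart of the computation. A secondary subtlety is the precise detection of (E)(iii), where the simultaneous vanishing of the two affine coefficients must be tracked back to the algebraic constraint $\theta=1-q$ and the exact relations among the multiplicities $m_1,m_2,m_3$.
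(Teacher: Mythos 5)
The paper does not prove this theorem: it is quoted verbatim from the reference \cite{SMAA2015d} (where the classification is carried out via a general study of cubic equations over $\mathbb{Z}_p$), so there is no in-paper argument to compare against. Your reconstruction is the natural one and, as far as I can check, correct. The key factorization is right: writing the fixed-point condition of Theorem~\ref{existenceTIpGM} with $k=3$ as $z_i(\theta+S)^3-((\theta-1)z_i+S+1)^3=0$ and noting $z_i=1$ is always a root, one indeed gets the quadratic cofactor $(\theta-1)^3z_i^2+(\theta-1)^2(\theta+3S+2)z_i-(S+1)^3$ (with $u=\theta-1$, $v=S+1$ the linear coefficient identity $(u+v)^3-3uv^2=u^3+3u^2v+v^3$ confirms this). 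Since $\theta\neq1$ the cofactor is genuinely quadratic, so each coordinate takes at most three values, one of which is $1$; this yields exactly the partition into cases (A)--(E). Your Vieta computation also checks out: with $A=\theta-1+3m_1$, $B=\theta-1+3m_2$, the sum relation gives the affine constraint $Az_1+Bz_2+(\theta+2+3m_3)=0$, one finds $S+1=\frac{(\theta-1)[(m_2-m_1)z_2+(m_3+1-m_1)]}{A}$, and the product relation $(\theta-1)^3z_1z_2=-(S+1)^3$ then collapses to the displayed cubics; the degenerate sub-case $A=B=0$ forces $\theta=1-q$, $m_1=m_2=m_3+1$ and $(z_1+z_2+1)^3=27z_1z_2$. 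Two small points deserve a remark in a full write-up: (a) Vieta's formulas apply because $z_1\neq z_2$ are then necessarily the two roots of the quadratic cofactor, which has nonzero leading coefficient; and (b) the steps are reversible (no denominator vanishes in the non-degenerate cases), which is what gives the ``if'' direction and justifies that $h\in\mathcal{E}_p$ parameterizes the residual freedom $\tilde h_q$ exactly. Neither point is a gap, only bookkeeping.
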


\section{Dynamics of the Potts--Bethe mapping}

It follows from Theorems \ref{existenceTIpGM} and \ref{Descriptionq>3}, in order to find TIpGM, we have to find all fixed points of \textit{the Potts--Bethe mapping} $f_{\theta, q, k} : \mathbb{Q}_p \to \mathbb{Q}_p$ defined as
\begin{equation}\label{IsingPottsGeneral}
	f_{\theta, q, k}(x)=\left(\frac{\theta x+q-1}{x+\theta+q-2}\right)^k
\end{equation}
where $\theta\in\mathcal{E}_p$ and $q,k\in\mathbb{N}$ such that $\theta\neq 1,\ \theta\neq 1-q.$ Therefore, we are aiming to study the dynamics of the Potts--Bethe mapping. It is well defined on the set $\textup{\textbf{Dom}}\{f_{\theta,q,k}\}:=\mathbb{Q}_p\setminus \{\mathbf{x}^{(\infty)}\}$ where $\mathbf{x}^{(\infty)}:=2-\theta-q$.

Throughout this paper, we always assume that $|\theta-1|_p<1, \ |q|_p<1$. This assumption is necessary to have non-unique translation invariant $p-$adic Gibbs measures.  The dynamics of the Potts--Bethe mapping for the cases $k=1$ and $k=2$ were studied in \cite{Liao2} and \cite{FMOK}, respectively. In this paper, we study the case $k=3$ with $p\equiv 2 \ (mod \ 3)$. The case $p\equiv 1 \ (mod \ 3) $ would be studied elsewhere. 

Let $f_{\theta, q, 3} : \textup{\textbf{Dom}}\{f_{\theta,q,3}\} \to \mathbb{Q}_p$ be the Potts--Bethe mapping 
\begin{equation}\label{IsingPottsk=3}
f_{\theta, q, 3}(x)=\left(\frac{\theta x+q-1}{x+\theta+q-2}\right)^3
\end{equation}
where $\textup{\textbf{Dom}}\{f_{\theta,q,3}\}:=\mathbb{Q}_p\setminus \{\mathbf{x}^{(\infty)}\}$ and $\mathbf{x}^{(\infty)}:=2-\theta-q$.

\subsection{The Fixed Point Set}
Let $\mathbf{Fix}\{f_{\theta, q, 3}\}:=\{x \in \mathbb{Q}_p : f_{\theta, q, 3}(x)=x \}$ be a set of all fixed points of the Potts--Bethe mapping. It is clear that $\mathbf{x}^{(0)}:=1 \in \mathbf{Fix}\{f_{\theta, q, 3}\}$. Moreover, it follows from $f_{\theta, q, 3}(x)-1=x-1$ that
\begin{multline*}
(x-1)(\theta-1)\frac{(\theta x+q-1)^2+(\theta x+q-1)(x+\theta+q-2)+(x+\theta+q-2)^2}{(x+\theta+q-2)^3}\\
=(x-1).	
\end{multline*}
Therefore, any other fixed point $x \neq \mathbf{x}^{(0)}$ is a root of the following cubic equation
\begin{multline*}
(\theta-1)\left[(\theta x+q-1)^2+(\theta x+q-1)(x+\theta+q-2)+(x+\theta+q-2)^2\right] \\
=(x+\theta+q-2)^3
\end{multline*}
or equivalently
\begin{equation}\label{wrtx}
(\theta-1)(\theta x+q-1)\left((\theta+1)x+\theta+2q-3\right)= (x+q-1)(x+\theta+q-2)^2.
\end{equation}

We introduce a new variable $y:=\frac{x-1+q}{\theta-1}+1$. The cubic equation \eqref{wrtx} can be written with respect to $y$ as follows
\begin{equation}\label{wrty}
y^3-(1+\theta+\theta^2)y^2-(2\theta+1)(1-\theta-q)y-(1-\theta-q)^2=0.
\end{equation}

Let us find all possible roots of the cubic equation \eqref{wrty} whenever $\theta \neq 1,\  \theta \neq 1-q$.

\begin{proposition}\label{structureroot}
Let $p\geq 5$ with $ p \equiv 2 \ (mod \ 3)$  and $|\theta-1|_p<1,\ |q|_p<1, \ (\theta-1)(1-\theta-q)\neq 0$. Then the cubic equation \eqref{wrty} has a unique root $\mathbf{y}^{(1)}=3+y^{(1)}_rp^r+\cdots$ in the $p-$adic field $\mathbb{Q}_p$. Moreover, we have that
\begin{eqnarray*}
\left|\mathbf{y}^{(1)}\right|_p=1\quad  \textup{and} \quad
\left|\mathbf{y}^{(1)}-3\right|_p=|q(4\theta+5)-(\theta-1)(4\theta+14)-q^2|_p.
\end{eqnarray*} 
\end{proposition}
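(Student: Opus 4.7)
The plan is to combine a Hensel lift at $y=3$ with a Newton-polygon plus quadratic-non-residue argument for the remaining roots. Write
$$P(y):=y^3-(1+\theta+\theta^2)y^2-(2\theta+1)(1-\theta-q)y-(1-\theta-q)^2.$$
Since $|\theta-1|_p<1$ and $|q|_p<1$, we have $\theta\equiv 1$ and $q\equiv 0\pmod p$, whence $1+\theta+\theta^2\equiv 3$ and $2\theta+1\equiv 3\pmod p$ (both units as $p\geq 5$), while $1-\theta-q\equiv 0\pmod p$. Hence the reduction of $P$ modulo $p$ is $y^2(y-3)$, so $0$ is a double root and $3$ a simple root. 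Hensel's lemma then lifts the simple root to a unique $\mathbf{y}^{(1)}\in\mathbb{Z}_p$ with $\mathbf{y}^{(1)}\equiv 3\pmod p$, and in particular $|\mathbf{y}^{(1)}|_p=1$. A direct expansion using $\epsilon:=\theta-1$ gives
$$P(3)=-18\epsilon+9q-4\epsilon^2+4\epsilon q-q^2=q(4\theta+5)-(\theta-1)(4\theta+14)-q^2$$
and $P'(3)=9-15\epsilon+3q-4\epsilon^2+2\epsilon q$, so $|P'(3)|_p=1$. The Newton iteration then yields $|\mathbf{y}^{(1)}-3|_p=|P(3)/P'(3)|_p=|P(3)|_p$, which is the stated formula.

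The harder task is to rule out any other root of $P$ in $\mathbb{Q}_p$. Set $w:=1-\theta-q$ and $v:=v_p(w)>0$. The valuations of the coefficients of $P$ from degree $3$ down to $0$ are $0,0,v,2v$, so the Newton polygon has slopes $0$ (once, accounting for $\mathbf{y}^{(1)}$) and $-v$ (twice); the two roots in $\overline{\mathbb{Q}_p}$ of valuation $v$ are precisely those not captured by Hensel at $3$. To decide whether they lie in $\mathbb{Q}_p$, I substitute $y=wu$ into $P(y)=0$ and divide by $w^2$ to obtain
$$wu^3-(1+\theta+\theta^2)u^2-(2\theta+1)u-1=0.$$
A putative $\mathbb{Q}_p$-root corresponds to a unit $u$, so reducing modulo $p$ yields the congruence $3u^2+3u+1\equiv 0\pmod p$, whose discriminant is $-3$. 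A short quadratic-reciprocity computation gives $\left(\tfrac{-3}{p}\right)=\left(\tfrac{p}{3}\right)$, which equals $-1$ precisely when $p\equiv 2\pmod 3$. Hence the congruence has no solution in $\mathbb{F}_p$, the two small roots of $P$ lie outside $\mathbb{Q}_p$, and $\mathbf{y}^{(1)}$ is the unique $\mathbb{Q}_p$-root.

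The main obstacle, and the only place where the hypothesis $p\equiv 2\pmod 3$ is genuinely used, is this last quadratic-non-residue argument: one must interpret the two small roots via the rescaling $y=wu$, isolate the residual obstruction as $3u^2+3u+1\in\mathbb{F}_p[u]$, and invoke $\left(\tfrac{-3}{p}\right)=-1$. The Hensel step at $y=3$ and the explicit evaluations of $P(3)$ and $P'(3)$ are routine algebraic computations; the complementary case $p\equiv 1\pmod 3$ announced by the authors fails at exactly this point, since there $3u^2+3u+1$ does split modulo $p$ and additional $\mathbb{Q}_p$-roots may appear.
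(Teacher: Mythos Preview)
Your proof is correct and follows the same strategy as the paper's: Hensel's lemma at $y=3$ produces the unique unit root together with $|\mathbf{y}^{(1)}-3|_p=|P(3)|_p$, while the fact that $-3$ is a quadratic non-residue modulo $p\equiv 2\pmod 3$ rules out the two roots of positive valuation. The only difference is packaging: the paper outsources the exclusion step and the norm computation to structural theorems on $p$-adic cubics from \cite{SMAA2015d} (invoking $\delta_1=b^2-4ac=-3(1-\theta-q)^2$ for the former and the shift $z=y-3$ for the latter), whereas your Newton-polygon analysis and rescaling $y=(1-\theta-q)u$ make the same argument self-contained, landing directly on the residual quadratic $3u^2+3u+1\in\mathbb{F}_p[u]$ with discriminant $-3$. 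Your computation of $P(3)$ is exactly the constant term $C$ the paper obtains after its shift, so the two norm arguments coincide.
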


\begin{proof} We first show that if the cubic equation \eqref{wrty} has any root in the $p-$adic field $\mathbb{Q}_p$ then it must lies in the set $\mathbb{Z}_p.$ Suppose the contrary, i.e. the cubic equation \eqref{wrty} has a root $y$ such that $|y|_p>1.$ Since $p\geq 5$ and $y^3=(1+\theta+\theta^2)y^2+(2\theta+1)(1-\theta-q)y+(1-\theta-q)^2,$ we obtain that 
$$|(1+\theta+\theta^2)y^2+(2\theta+1)(1-\theta-q)y+(1-\theta-q)^2|_p=|y|^2_p=|y|^3_p.$$ It is a contradiction. Therefore, any root of the cubic equation must lie in the set $\mathbb{Z}_p.$ We refer to \cite{SMAA2015d} for the detailed study of the general cubic equation over $\mathbb{Z}_p$.  

Let $a=-(1+\theta+\theta^2),\ b=-(2\theta+1)(1-\theta-q)$ and $c=-(1-\theta-q)^2$. One can easily verify that $|b|_p^2=|c|_p=|1-\theta-q|_p^2<1=|a|_p$. Let $h(y)=y^3+ay^2+by+c.$ Then $h'(y)=3y^2+2ay.$

Any root $\mathbf{y} = y_0+y_1p+\cdots$ of the cubic equation \eqref{wrty} which belongs to the set $\mathbb{Z}_p^*$ must satisfy the following condition $y_0^3-3y_0^2 \equiv 0 \ (mod \ p)$ or equivalently $\mathbf{y}\equiv y_0\equiv 3 \ (mod \ p)$. On the other hand, since $h(3)\equiv 0 \ (mod \ p)$ and  $h'(3)\equiv 9 \not\equiv 0\ (mod \ p)$ for $p\geq 5,$ due to Hensel's lemma the cubic equation \eqref{wrty} has a unique root over $\mathbb{Z}_p^*$ which satisfies  $\mathbf{y}\equiv 3 \ (mod \ p).$ Consequently, the cubic equation \eqref{wrty} has a unique root $\mathbf{y}^{(1)}=3+y_r^{(1)}p^r+\cdots$ over $\mathbb{Z}_p^*$ where $r \in\mathbb{N}.$

We now want to show that the cubic equation \eqref{wrty} does not have any root over $\mathbb{Z}_p\setminus \mathbb{Z}_p^{*}.$ Let $\delta_1=b^2-4ac$. It is clear that $\delta_1=-3(1-\theta-q)^2$ and $|b|_p^2=|a|_p|c|_p=|1-\theta-q|_p^2=|\delta_1|_p$. Since $p \equiv 2 \ (mod \ 3)$ and $-3$ is a \underline{quadratic  non-residue} modulo $p$, there \underline{does not exist} $\sqrt{\delta_1}.$ Therefore, the cubic equation \eqref{wrty} does not have any roots over $\mathbb{Z}_p\setminus \mathbb{Z}_p^{*}$ (see Theorem 5.1 (\textbf{A})(\textit{iv}), \cite{SMAA2015d}). Consequently, the cubic equation \eqref{wrty} has a unique root $\mathbf{y}^{(1)}=3+y^{(1)}_rp^r+\cdots$  over $\mathbb{Q}_p.$

 Lastly, in order to prove $\left|\mathbf{y}^{(1)}-3\right|_p=|q(4\theta+5)-(\theta-1)(4\theta+14)-q^2|_p$, we have to introduce a new variable $z:=y-3$. In this case, the cubic equation \eqref{wrty} takes the following form with respect to $z$
\begin{multline}\label{wrtz}
z^3+\left[8-\theta-\theta^2\right]z^2+\left[21-6\theta(\theta+1)-(2\theta+1)(1-\theta-q)\right]z \\ +\left[q(4\theta+5)-(\theta-1)(4\theta+14)-q^2\right]=0
\end{multline}
Let $A=(8-\theta-\theta^2),\ B=21-6\theta(\theta+1)-(2\theta+1)(1-\theta-q)$ and $C=q(4\theta+5)-(\theta-1)(4\theta+14)-q^2$. One can check that $|A|_p=1,\ |B|_p=1$ and $|C|_p=|q(4\theta+5)-(\theta-1)(4\theta+14)-q^2|_p \leq \max\{|q|_p,|\theta-1|_p\}<1$. Since $|B|_p=|A|_p^2$ and $|C|_p<|A|_p^3$, the cubic equation \eqref{wrtz} has a unique root $\mathbf{z}$ (see Theorem 5.1 \textbf{F}, \cite{SMAA2015d}. Indeed, we already proved it with respect to the variable $y$ above) such $|\mathbf{z}|_p=\frac{|C|_p}{|B|_p}=|q(4\theta+5)-(\theta-1)(4\theta+14)-q^2|_p$ (see the proof of Theorem 5.1,  \cite{SMAA2015d}). Hence, $|\mathbf{z}^{(1)}|_p=\left|\mathbf{y}^{(1)}-3\right|_p=|q(4\theta+5)-(\theta-1)(4\theta+14)-q^2|_p$. It completes the proof.
\end{proof}
\begin{corollary}\label{structurey1}
Let $p\geq 5$ with $ p \equiv 2 \ (mod \ 3)$  and $|\theta-1|_p<|q|_p<1, \ (\theta-1)(1-\theta-q)\neq 0, \ q=p^ms$ for some $s,m \in \mathbb{N},\ (s,p)=1$. Let $1 \leq s_0 \leq p-1$ such that $s_0 \equiv -s \ (mod \ p)$. Then the root $\mathbf{y}^{(1)}$ of the cubic equation \eqref{wrty} satisfies the following congruence
$$\mathbf{y}^{(1)} \equiv 3 + s_0p^m \ (mod \ p^{m+1}) \quad \textup{or equivalently} \quad |\mathbf{y}^{(1)}-3+q|_p<|\mathbf{y}^{(1)}-3|_p=|q|_p.$$
\end{corollary}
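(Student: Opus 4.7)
The plan is to sharpen the estimate $\left|\mathbf{y}^{(1)}-3\right|_p=|q|_p$ from Proposition \ref{structureroot} by one further $p$-adic order, by examining the cubic \eqref{wrtz} under the stronger hypothesis $|\theta-1|_p<|q|_p$.

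First, I would refine the constant term of \eqref{wrtz}. Expanding $4\theta+5=4(\theta-1)+9$ and $4\theta+14=4(\theta-1)+18$ yields
\[
C-9q \;=\; 4q(\theta-1)-4(\theta-1)^{2}-18(\theta-1)-q^{2}.
\]
Since $|q|_p<1$ and $|\theta-1|_p<|q|_p$ are inequalities in $p^{\mathbb{Z}}$, we have $|q|_p\leq p^{-1}$ and $|\theta-1|_p\leq p^{-1}|q|_p$; each of the four summands on the right is then bounded by $p^{-1}|q|_p=p^{-(m+1)}$, so $|C-9q|_p\leq p^{-(m+1)}$. A parallel calculation gives $B-9=-(\theta-1)(4\theta+11)+(2\theta+1)q$ with $|B-9|_p\leq|q|_p$, while $|A|_p=1$ since $A\equiv 6\pmod{p}$.

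Second, set $\mathbf{z}^{(1)}:=\mathbf{y}^{(1)}-3$. By Proposition \ref{structureroot}, $|\mathbf{z}^{(1)}|_p=|q|_p=p^{-m}$ with $m\geq 1$. Hence $|(\mathbf{z}^{(1)})^{3}|_p=p^{-3m}$ and $|A(\mathbf{z}^{(1)})^{2}|_p=p^{-2m}$ are both $\leq p^{-(m+1)}$, so \eqref{wrtz} forces
\[
\bigl|B\mathbf{z}^{(1)}+C\bigr|_p \;\leq\; p^{-(m+1)}.
\]
Substituting $B=9+(B-9)$ and $C=9q+(C-9q)$, the total error from the two replacements is at most $|B-9|_p|\mathbf{z}^{(1)}|_p+|C-9q|_p\leq|q|_p^{2}+p^{-(m+1)}\leq p^{-(m+1)}$. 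Dividing the resulting bound $|9\mathbf{z}^{(1)}+9q|_p\leq p^{-(m+1)}$ by the unit $9$ (a unit since $p\geq 5$) gives $|\mathbf{y}^{(1)}-3+q|_p\leq p^{-(m+1)}<|q|_p$, which is the second form of the conclusion.

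Finally, to obtain the congruence form, write $q=p^{m}s$ with $(s,p)=1$. Since $s_0\equiv-s\pmod{p}$, one has $-q=p^{m}(-s)\equiv s_0 p^{m}\pmod{p^{m+1}}$; together with $\mathbf{y}^{(1)}-3\equiv-q\pmod{p^{m+1}}$ from the previous paragraph, this yields $\mathbf{y}^{(1)}\equiv 3+s_0 p^{m}\pmod{p^{m+1}}$. The only real obstacle is the valuation bookkeeping in the first paragraph: every term outside the ``leading'' pair $9\mathbf{z}^{(1)}$ and $9q$ must be verified to lie strictly inside $p^{-(m+1)}$ in the $p$-adic norm; once that is in place, the rest is algebraic rearrangement.
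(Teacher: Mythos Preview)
Your proof is correct and follows essentially the same idea as the paper's: both reduce the cubic modulo $p^{m+1}$ under the hypothesis $|\theta-1|_p<|q|_p$ and read off the linear congruence $9(\mathbf{y}^{(1)}-3)+9q\equiv 0$. The only cosmetic difference is that the paper rewrites \eqref{wrty} as $y^{2}(y-3)-(\theta+2)(\theta-1)y^{2}-(2\theta+1)(1-\theta-q)y-(1-\theta-q)^{2}=0$ and argues with the digit $y^{(1)}_{m}$ directly, whereas you reuse the shifted cubic \eqref{wrtz} from the proof of Proposition~\ref{structureroot} and bound $C-9q$, $B-9$, $A$ separately; the two presentations are interchangeable.
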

\begin{proof}
Due to Proposition \ref{structureroot}, we have that $|\mathbf{y}^{(1)}-3|_p=|q(4\theta+5)-(\theta-1)(4\theta+14)-q^2|_p=|q|_p$. It means that $\mathbf{y}^{(1)}=3+y^{(1)}_mp^m+\cdots$. We can rewrite the cubic equation \eqref{wrty} as follows
\begin{equation}\label{wrty2}
y^2(y-3)-(\theta+2)(\theta-1)y^2-(2\theta+1)(1-\theta-q)y-(1-\theta-q)^2=0.
\end{equation}
Since $|\theta-1|_p<|q|_p<1,$ we have that 
\begin{eqnarray*}
\left(\mathbf{y}^{(1)}\right)^2\equiv 9+6y^{(1)}_mp^m \ (mod \ p^{m+1}), \qquad
\theta\equiv  1 \ (mod \ p^{m+1}),\\
|(\theta+2)(\theta-1)\left(\mathbf{y}^{(1)}\right)^2+(2\theta+1)(1-\theta)\mathbf{y}^{(1)}+(1-\theta-q)^2|_p<|q|_p.
\end{eqnarray*}
It follows from  \eqref{wrty2} that
\begin{eqnarray*}
9y^{(1)}_mp^m+6\left(y^{(1)}_m\right)^2p^{2m}+9p^ms \equiv 0 \ (mod \ p^{m+1}) \quad \textup{equivalently} \quad y^{(1)}_m+s \equiv 0 \ (mod \ p).
\end{eqnarray*}
Therefore, we get $y^{(1)}_m=s_0 \equiv -s \ (mod \ p)$ which completes the proof.
\end{proof}

\begin{theorem}\label{fixedpoints}
Let $p\geq 5$ with $ p \equiv 2 \ (mod \ 3)$  and $|\theta-1|_p<1,\ |q|_p<1, \ (\theta-1)(1-\theta-q)\neq 0$. Let $\mathbf{y}^{(1)}$ be a unique root of the cubic equation \eqref{wrty} and  $\mathbf{x}^{(0)}:=1,\ \mathbf{x}^{(1)}:=1-q+(\theta-1)(\mathbf{y}^{(1)}-1)$. Then, we have that $\mathbf{Fix}\{f_{\theta, q, 3}\}=\{\mathbf{x}^{(0)},\ \mathbf{x}^{(1)}\}$.
\end{theorem}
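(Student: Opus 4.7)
The plan is to reduce the fixed-point equation $f_{\theta,q,3}(x)=x$ to the cubic \eqref{wrty} already handled in Proposition \ref{structureroot}, with the trivial fixed point $\mathbf{x}^{(0)}=1$ peeled off by an explicit factorization.

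First I would verify that $\mathbf{x}^{(0)}=1$ is a fixed point. The hypothesis $\theta\neq 1-q$ (equivalently, $\theta+q-1\neq 0$) guarantees that $1\in\textup{\textbf{Dom}}\{f_{\theta,q,3}\}$, and substitution gives $f_{\theta,q,3}(1)=\left(\frac{\theta+q-1}{\theta+q-1}\right)^3=1$. Next, as done in the excerpt, apply $a^3-b^3=(a-b)(a^2+ab+b^2)$ to $a=\theta x+q-1$ and $b=x+\theta+q-2$; since $a-b=(\theta-1)(x-1)$, the expression $f_{\theta,q,3}(x)-x$ factors as $(x-1)$ times a rational function whose numerator, after clearing $(x+\theta+q-2)^3$, is exactly the cubic \eqref{wrtx}. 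Any fixed point distinct from $\mathbf{x}^{(0)}$ therefore satisfies \eqref{wrtx}.

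I would then invoke the affine change of variable $y=\frac{x-1+q}{\theta-1}+1$, which is a bijection of $\mathbb{Q}_p$ because $\theta\neq 1$, with inverse $x=1-q+(\theta-1)(y-1)$. A routine expansion (already recorded in the excerpt) transports \eqref{wrtx} onto \eqref{wrty}. Under the running hypotheses $p\equiv 2\pmod 3$, $|\theta-1|_p<1$, $|q|_p<1$ and $(\theta-1)(1-\theta-q)\neq 0$, Proposition \ref{structureroot} furnishes a \emph{unique} root $\mathbf{y}^{(1)}\in\mathbb{Q}_p$ of \eqref{wrty}. Inverting the substitution pins down $\mathbf{x}^{(1)}=1-q+(\theta-1)(\mathbf{y}^{(1)}-1)$ as the unique potential non-trivial fixed point.

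To finish, I would check that $\mathbf{x}^{(1)}$ lies in $\textup{\textbf{Dom}}\{f_{\theta,q,3}\}$, i.e.\ $\mathbf{x}^{(1)}\neq \mathbf{x}^{(\infty)}=2-\theta-q$; unwinding the substitution, this amounts to $\mathbf{y}^{(1)}\neq 0$, which is immediate from the assertion $|\mathbf{y}^{(1)}|_p=1$ in Proposition \ref{structureroot}. No real obstacle arises: Proposition \ref{structureroot} does the essential work, and the rest is the bookkeeping of a linear change of variable and a trivial direct verification that $x=1$ is fixed.
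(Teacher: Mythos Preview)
Your proof is correct and follows exactly the approach the paper intends: the paper's own ``proof'' is the single line ``The proof follows from Proposition \ref{structureroot}'', relying on the factorization and the change of variable $y=\frac{x-1+q}{\theta-1}+1$ already set up in the text preceding that proposition. Your write-up simply makes explicit the bookkeeping (domain check for $\mathbf{x}^{(0)}$, the $a^3-b^3$ factorization, the bijectivity of the affine substitution, and the verification $\mathbf{x}^{(1)}\neq\mathbf{x}^{(\infty)}$ via $|\mathbf{y}^{(1)}|_p=1$) that the paper leaves implicit.
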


The proof follows from Proposition \ref{structureroot}.

\subsection{The Local Behavior of The Fixed Points}

We study the local behavior of the fixed points of the Potts--Bethe mapping \eqref{IsingPottsk=3}. In what follows, we assume $|\theta-1|_p<|q|_p<1$. 

Let $\lambda=\frac{d}{dx}f_{\theta,q,3}(\mathbf{x})$ where $\mathbf{x}$ is a fixed point of the Potts--Bethe mapping. Recall (see \cite{AVKhA2009,KhN2004}) that a fixed point $\mathbf{x}$ is called \textit{attracting} if $0 \leq |\lambda|_p < 1$, \textit{indifferent} if $|\lambda|_p=1$ and \textit{repelling} if $|\lambda|_p>1$.

\begin{theorem}\label{behaviourfixedpoints}
Let $p\geq 5$ with $ p \equiv 2 \ (mod \ 3)$  and $|\theta-1|_p<|q|_p<1,\ (\theta-1)(1-\theta-q)\neq 0.$  
The following statements are true:
\begin{itemize}
\item[(i)] $\mathbf{x}^{(0)}$ is an attracting fixed point;
\item[(ii)] $\mathbf{x}^{(1)}$ is a repelling fixed point.
\end{itemize}
\end{theorem}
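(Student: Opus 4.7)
The plan is to compute the derivative $\lambda(x)=f'_{\theta,q,3}(x)$ in closed form and then evaluate it at the two fixed points, using Proposition \ref{structureroot} (together with Corollary \ref{structurey1}) to control the $p$--adic sizes at $\mathbf{x}^{(1)}$. Writing $f_{\theta,q,3}(x)=g(x)^3$ with $g(x)=\frac{\theta x+q-1}{x+\theta+q-2}$, a routine quotient-rule computation gives
\begin{equation*}
g'(x)=\frac{(\theta-1)(\theta+q-1)}{(x+\theta+q-2)^2},\qquad \lambda(x)=3g(x)^{2}\,\frac{(\theta-1)(\theta+q-1)}{(x+\theta+q-2)^2}.
\end{equation*}
This is the only formula I will need; everything reduces to plugging in and reading off $p$--adic norms.

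For the fixed point $\mathbf{x}^{(0)}=1$, the two factors $\theta+q-1$ in the denominator of $g(1)$ and the numerator of $\theta x+q-1$ coincide, so $g(1)=1$ and $\lambda_{0}=\frac{3(\theta-1)}{\theta+q-1}$. Because $|\theta-1|_p<|q|_p$, the strong triangle inequality yields $|\theta+q-1|_p=|q|_p$, and since $p\geq 5$ we have $|3|_p=1$. Therefore $|\lambda_{0}|_p=|\theta-1|_p/|q|_p<1$, establishing (i).

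For $\mathbf{x}^{(1)}$ the key simplification comes from the substitution $y=\frac{x-1+q}{\theta-1}+1$ already used to derive \eqref{wrty}. A direct expansion gives the two identities
\begin{equation*}
\mathbf{x}^{(1)}+\theta+q-2=(\theta-1)\mathbf{y}^{(1)},\qquad \theta\mathbf{x}^{(1)}+q-1=(\theta-1)\bigl[1-q+\theta(\mathbf{y}^{(1)}-1)\bigr],
\end{equation*}
so $g(\mathbf{x}^{(1)})=\frac{1-q+\theta(\mathbf{y}^{(1)}-1)}{\mathbf{y}^{(1)}}$. Since $\mathbf{x}^{(1)}$ is a fixed point, $g(\mathbf{x}^{(1)})^{3}=\mathbf{x}^{(1)}$, and from the formula $\mathbf{x}^{(1)}=1-q+(\theta-1)(\mathbf{y}^{(1)}-1)$ together with $|\theta-1|_p<|q|_p$ and $|\mathbf{y}^{(1)}|_p=1$ one reads off $|\mathbf{x}^{(1)}|_p=1$, hence $|g(\mathbf{x}^{(1)})|_p=1$. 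Plugging into the derivative formula and using $\mathbf{x}^{(1)}+\theta+q-2=(\theta-1)\mathbf{y}^{(1)}$ to cancel one factor of $(\theta-1)$ gives
\begin{equation*}
\lambda_{1}=\frac{3\,g(\mathbf{x}^{(1)})^{2}(\theta+q-1)}{(\theta-1)\bigl(\mathbf{y}^{(1)}\bigr)^{2}},\qquad |\lambda_{1}|_{p}=\frac{|q|_p}{|\theta-1|_p}>1,
\end{equation*}
which proves (ii).

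The only non-mechanical step is spotting the telescoping cancellation at $\mathbf{x}^{(1)}$: after the $y$--substitution, the denominator $x+\theta+q-2$ contributes an explicit factor $(\theta-1)$, which exactly matches the $(\theta-1)$ surplus from $g'$, leaving the clean ratio $|q|_p/|\theta-1|_p$. I anticipate no real obstacle beyond careful bookkeeping of the ultrametric inequalities, since all of the size information about $\mathbf{y}^{(1)}$ (namely $|\mathbf{y}^{(1)}|_p=1$ and $|\mathbf{y}^{(1)}-3|_p=|q|_p$) has already been supplied by Proposition \ref{structureroot} and Corollary \ref{structurey1}.
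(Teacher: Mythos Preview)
Your proof is correct and follows essentially the same approach as the paper: both compute $f'_{\theta,q,3}$ via the chain rule, evaluate directly at $\mathbf{x}^{(0)}=1$, and for $\mathbf{x}^{(1)}$ use the identity $\mathbf{x}^{(1)}+\theta+q-2=(\theta-1)\mathbf{y}^{(1)}$ (coming from the $y$--substitution) together with $|\mathbf{y}^{(1)}|_p=1$ from Proposition~\ref{structureroot} to extract the ratio $|q|_p/|\theta-1|_p$. The only cosmetic difference is that the paper algebraically simplifies the derivative using the fixed-point relation $g(\mathbf{x}^{(i)})^{3}=\mathbf{x}^{(i)}$ before taking norms, whereas you keep $g(\mathbf{x}^{(1)})^{2}$ explicit and instead infer $|g(\mathbf{x}^{(1)})|_p=1$ from $|\mathbf{x}^{(1)}|_p=1$; the content is identical.
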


\begin{proof}
We have to show that $\left|f'_{\theta, q, 3}(\mathbf{x}^{(0)})\right|_p<1$ and $\left|f'_{\theta, q, 3}(\mathbf{x}^{(1)})\right|_p>1$. It is easy to check that
\begin{eqnarray}\label{derivative}
f'_{\theta, q, 3}(\mathbf{x}^{(i)})&=&\frac{3(\theta-1)(\theta-1+q)(\theta \mathbf{x}^{(i)}+q-1)^2}{(\mathbf{x}^{(i)}+\theta+q-2)^4}\nonumber\\
&=&\frac{3(\theta-1)(\theta-1+q)\mathbf{x}^{(i)}}{(\theta \mathbf{x}^{(i)}+q-1)(\mathbf{x}^{(i)}+\theta+q-2)}
\end{eqnarray}
for $i=0,1.$ Hence, $\left|f'_{\theta, q, 3}(\mathbf{x}^{(0)})\right|_p=\frac{|\theta-1|_p}{|q|_p} < 1$.
It follows from \eqref{derivative} and $\mathbf{x}^{(1)}=1-q+(\theta-1)(\mathbf{y}^{(1)}-1)$  that
\begin{equation}\label{derivativefixed2}
f'_{\theta, q, 3}(\mathbf{x}^{(1)})=\frac{3(\theta-1+q)\mathbf{x}^{(1)}}{(\theta-1)\mathbf{y}^{(1)}(\theta \mathbf{y}^{(1)}+1-\theta-q)}.
\end{equation}
Due to Proposition \ref{structureroot}, we have that that $\left|\mathbf{y}^{(1)}\right|_p=|\mathbf{x}^{(1)}|_p=1$. Therefore, since $|\theta-1|_p<|q|_p<1,$ we get that $\left|f'_{\theta, q, 3}(\mathbf{x}^{(1)})\right|_p=\frac{|q|_p}{|\theta-1|_p} > 1$. This completes the proof.
\end{proof}

\subsection{The Basin of Attraction of The Attracting Fixed Point}
We describe a basin of attraction 
$$\mathfrak{B}(\mathbf{x}^{(0)}):=\left\{ x \in \mathbb{Q}_p : \lim_{n \to +\infty}f^{(n)}_{\theta, q, 3}(x)=\mathbf{x}^{(0)}\right\}$$
of the attracting fixed point $\mathbf{x}^{(0)}:=1$ where $f_{\theta, q, 3}^{(n+1)}(x)=f_{\theta, q, 3}\left(f_{\theta, q, 3}^{(n)}(x)\right)$ for $n\in\mathbb{N}$. 

We assume that $p\geq 5, \ |\theta-1|_p<|q|_p<1,$ and $p \equiv 2 \ (mod \ 3)$. 

We introduce the following sets
\begin{eqnarray*}
	\mathcal{A}_0 &:=& \left\{ x \in \mathbb{Q}_p : \left|x-\mathbf{x}^{(0)}\right|_p < |q|_p \right\}, \\ 
	\mathcal{A}_1 &:=& \left\{ x \in \mathbb{Q}_p : \left|x-\mathbf{x}^{(0)}\right|_p > |q|_p \right\}, \\ 
	\mathcal{A}_{0,\infty} &:=& \left\{ x \in \mathbb{Q}_p : \left|x-\mathbf{x}^{(\infty)}\right|_p = \left|x-\mathbf{x}^{(0)}\right|_p = |q|_p \right\}, \\ 
	\mathcal{A}_2 &:=& \left\{ x \in \mathbb{Q}_p : \left|\theta-1\right|_p < \left|x-\mathbf{x}^{(\infty)}\right|_p < |q|_p \right\},\\
	\mathcal{A}^{(1)}_{1,\infty} &:=& \left\{ x \in \mathbb{Q}_p : \left|x-\mathbf{x}^{(1)}\right|_p = \left|x-\mathbf{x}^{(\infty)}\right|_p = |\theta-1|_p \right\}, \\
	\mathcal{A}^{(2)}_{1,\infty} &:=& \left\{ x \in \mathbb{Q}_p : \left|x-\mathbf{x}^{(1)}\right|_p < \left|x-\mathbf{x}^{(\infty)}\right|_p = |\theta-1|_p \right\}, \\  
	\mathcal{A}_{\infty} &:=& \left\{ x \in \mathbb{Q}_p : \ 0<\left|x-\mathbf{x}^{(\infty)}\right|_p < \left|\theta-1\right|_p \right\}.
\end{eqnarray*}
It is clear that $\textup{\textbf{Dom}}\{f_{\theta,q,3}\}:=\mathbb{Q}_p\setminus \{\mathbf{x}^{\infty}\}=\mathcal{A}_0 \cup \mathcal{A}_1\cup \mathcal{A}_2\cup \mathcal{A}_{0,\infty} \cup \mathcal{A}^{(1)}_{1,\infty}\cup \mathcal{A}^{(2)}_{1,\infty}\cup\mathcal{A}_{\infty}.$

\begin{proposition}\label{p=2mod3}
	Let $p\geq 5$ with $p \equiv 2 \ (mod \ 3)$ and  $|\theta-1|_p < |q|_p < 1, \ (\theta-1)(1-\theta-q)\neq 0$. Then the following inclusions hold:
	\begin{itemize}
		\item[(i)] $\mathcal{A}_0 \cup \mathcal{A}_1 \cup \mathcal{A}_2 \cup \mathcal{A}_{0,\infty}\subset f^{(-1)}_{\theta,q,3}\left(\mathcal{A}_0\right);$
		\item[(ii)] $\mathcal{A}_{\infty}\subset f^{(-1)}_{\theta,q,3}\left(\mathcal{A}_{1} \right)\subset f^{(-2)}_{\theta,q,3}\left(\mathcal{A}_0\right);$
		\item[(iii)] $\mathcal{A}^{(1)}_{1,\infty}\subset f^{(-1)}_{\theta,q,3}\left(\mathcal{A}_{0,\infty} \right)\subset f^{(-2)}_{\theta,q,3}\left(\mathcal{A}_0\right).$
	\end{itemize}
\end{proposition}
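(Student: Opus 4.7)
The plan is to exploit the factorization
\begin{equation*}
f_{\theta,q,3}(x)-1=\frac{(\theta-1)(x-1)(u^{2}+u+1)}{x-\mathbf{x}^{(\infty)}}, \qquad u:=\frac{\theta x+q-1}{x-\mathbf{x}^{(\infty)}},
\end{equation*}
together with its equivalent form $u=\theta-\frac{(\theta-1)(\theta+q-1)}{x-\mathbf{x}^{(\infty)}}$, and the arithmetic fact that, since $p\equiv 2\pmod{3}$, the polynomial $T^{2}+T+1$ is irreducible modulo $p$ and therefore $|u^{2}+u+1|_{p}=1$ whenever $|u|_{p}\leq 1$. The hypothesis $|\theta-1|_{p}<|q|_{p}<1$ enters through the identities $|\theta+q-1|_{p}=|q|_{p}=|1-\mathbf{x}^{(\infty)}|_{p}$, which pin down several norms via the ultrametric inequality.

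For (i) I would treat the four sets uniformly. On each of $\mathcal{A}_{0},\mathcal{A}_{1},\mathcal{A}_{2},\mathcal{A}_{0,\infty}$ the strong triangle inequality fixes both $|x-1|_{p}$ and $|x-\mathbf{x}^{(\infty)}|_{p}$: for example $|x-\mathbf{x}^{(\infty)}|_{p}=|q|_{p}$ on $\mathcal{A}_{0}$ and $\mathcal{A}_{0,\infty}$, $|x-\mathbf{x}^{(\infty)}|_{p}=|x-1|_{p}$ on $\mathcal{A}_{1}$, and $|x-1|_{p}=|q|_{p}$ on $\mathcal{A}_{2}$. A direct check then gives $|u-1|_{p}<1$, hence $|u|_{p}=1$ and $|u^{2}+u+1|_{p}=1$. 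Substituting into the factorization and using $|\theta-1|_{p}<|q|_{p}$ (together with $|x-\mathbf{x}^{(\infty)}|_{p}>|\theta-1|_{p}$ on $\mathcal{A}_{2}$) yields $|f_{\theta,q,3}(x)-1|_{p}<|q|_{p}$, i.e.\ $f_{\theta,q,3}(x)\in\mathcal{A}_{0}$.

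For (ii) I would split $\mathcal{A}_{\infty}$ into three sub-cases according to how $|x-\mathbf{x}^{(\infty)}|_{p}$ compares with $|\theta-1|_{p}|q|_{p}$, using the form $u=\theta-\frac{(\theta-1)(\theta+q-1)}{x-\mathbf{x}^{(\infty)}}$. If $|x-\mathbf{x}^{(\infty)}|_{p}<|\theta-1|_{p}|q|_{p}$ then $|u|_{p}>1$ and $|f_{\theta,q,3}(x)-1|_{p}=|u|_{p}^{3}>1$. If $|x-\mathbf{x}^{(\infty)}|_{p}>|\theta-1|_{p}|q|_{p}$ then $|u|_{p}=1$, $u\equiv 1\pmod{p}$, and $|f_{\theta,q,3}(x)-1|_{p}=|u-1|_{p}\in(|q|_{p},1)$. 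In the borderline case $|x-\mathbf{x}^{(\infty)}|_{p}=|\theta-1|_{p}|q|_{p}$ one gets $|u-1|_{p}=1$, and the mod-$p$ irreducibility of $T^{2}+T+1$ gives $|u^{3}-1|_{p}=1$. In every sub-case $|f_{\theta,q,3}(x)-1|_{p}>|q|_{p}$, so $f_{\theta,q,3}(x)\in\mathcal{A}_{1}$, and applying (i) then gives $f_{\theta,q,3}^{(2)}(x)\in\mathcal{A}_{0}$.

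Part (iii) is the delicate step and constitutes the main obstacle. I would pass to the variable $y=\frac{x-1+q}{\theta-1}+1$ of Proposition \ref{structureroot}, in which $x-\mathbf{x}^{(\infty)}=(\theta-1)y$, $x-\mathbf{x}^{(1)}=(\theta-1)(y-\mathbf{y}^{(1)})$, and $u=\theta+\frac{1-\theta-q}{y}$. The condition $x\in\mathcal{A}^{(1)}_{1,\infty}$ becomes $|y|_{p}=|y-\mathbf{y}^{(1)}|_{p}=1$, which, combined with $\mathbf{y}^{(1)}\equiv 3\pmod{p}$ from Proposition \ref{structureroot}, forces $|y-3|_{p}=1$. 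A direct expansion gives $|u-1|_{p}=|q|_{p}$ and hence $|f_{\theta,q,3}(x)-1|_{p}=|q|_{p}$; the essential new issue is to promote the cheap bound $|f_{\theta,q,3}(x)-\mathbf{x}^{(\infty)}|_{p}\leq|q|_{p}$ to an equality. For this I would use the second-order expansion $u^{3}-1=3(u-1)+(u-1)^{2}(u+2)$ (whose last term has norm $|q|_{p}^{2}$), add $\theta+q-1$, and absorb the $(\theta-1)$-contributions into an error of norm $<|q|_{p}$, obtaining
\begin{equation*}
f_{\theta,q,3}(x)-\mathbf{x}^{(\infty)}=\frac{q(y-3)}{y}+R, \qquad |R|_{p}<|q|_{p}.
\end{equation*}
Since $|y|_{p}=|y-3|_{p}=1$, the leading term has norm exactly $|q|_{p}$, so $f_{\theta,q,3}(x)\in\mathcal{A}_{0,\infty}$ and a second application of (i) yields $f_{\theta,q,3}^{(2)}(x)\in\mathcal{A}_{0}$. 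The hard part is precisely this cancellation check: without the input $\mathbf{y}^{(1)}\equiv 3\pmod{p}$ from Proposition \ref{structureroot} one could not rule out an accidental drop of $|f_{\theta,q,3}(x)-\mathbf{x}^{(\infty)}|_{p}$ below $|q|_{p}$.
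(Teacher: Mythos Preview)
Your proposal is correct and follows essentially the same route as the paper: the same factorization $f_{\theta,q,3}(x)-1=(u-1)(u^{2}+u+1)$ (the paper writes the quadratic factor as $g_1(x)$ and also uses the equivalent form $g(x)=(x-\mathbf{x}^{(\infty)})^{2}(u^{2}+u+1)$), the identical three-way split of $\mathcal{A}_{\infty}$ in (ii) hinging on the nonexistence of nontrivial cube roots of unity modulo $p$ when $p\equiv 2\pmod 3$, and the same appeal to $\mathbf{y}^{(1)}\equiv 3\pmod p$ from Proposition~\ref{structureroot} for (iii). The only cosmetic difference is in part (iii): the paper extracts the leading $|q|_p$-term via an explicit four-term identity for $f_{\theta,q,3}(x)-1+q$ that mentions $\mathbf{x}^{(1)}$ and $\mathbf{y}^{(1)}$ directly, whereas your second-order expansion $u^{3}-1=3(u-1)+(u-1)^{2}(u+2)$ leading to $f_{\theta,q,3}(x)-\mathbf{x}^{(\infty)}=q(y-3)/y+R$ with $|R|_p<|q|_p$ is a slightly cleaner way to reach the same conclusion.
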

\begin{proof}
(i) It is clear that
\begin{equation}\label{fminusx0p=2mod3}
f_{\theta,q,3}(x)-\mathbf{x}^{(0)} =\frac{(\theta-1)(x-\mathbf{x}^{(0)})}{(x-\mathbf{x}^{(\infty)})^3}g(x)
\end{equation}
where $g(x):=(\theta x+q-1)^2+(\theta x+q-1)(x-\mathbf{x}^{(\infty)})+(x-\mathbf{x}^{(\infty)})^2$. 

Let us first show that $f_{\theta,q,3}(x) \in \mathcal{A}_0$ for any $x \in \mathcal{A}_0 \cup \mathcal{A}_1$.
In this case, we have
$$
|x-\mathbf{x}^{(\infty)}|_p=\left|x-\mathbf{x}^{(0)}+q+(\theta-1)\right|_p=
\begin{cases}
|q|_p, & x \in \mathcal{A}_0, \\
|x-\mathbf{x}^{(0)}|_p, & x \in \mathcal{A}_1, \\
\end{cases}
$$
$$
|\theta x+q-1|_p=\left|\theta(x-\mathbf{x}^{(0)})+q+(\theta-1)\right|_p=
\begin{cases}
|q|_p, & x \in \mathcal{A}_0, \\
|x-\mathbf{x}^{(0)}|_p, & x \in \mathcal{A}_1, \\
\end{cases}
$$
$$|g(x)|_p \leq
\begin{cases}
|q|_p^2, & x \in \mathcal{A}_0, \\
|x-\mathbf{x}^{(0)}|_p^2, & x \in \mathcal{A}_1. \\
\end{cases}
$$
Therefore, we obtain from \eqref{fminusx0p=2mod3} that 
$$
|f_{\theta,q,3}(x)-\mathbf{x}^{(0)}|_p \leq
\left\{
\begin{array}{cc}
\frac{|x-\mathbf{x}^{(0)}|_p}{|q|_p}|\theta-1|_p, & x \in \mathcal{A}_0 \\
|\theta-1|_p, & x \in 
\mathcal{A}_1
\end{array}
\right\}
\leq |\theta-1|_p<|q|_p.
$$
This means that $f_{\theta,q,3}(x) \in \mathcal{A}_0$ for any $x \in \mathcal{A}_0 \cup \mathcal{A}_1$.

Let us now show that $f_{\theta,q,3}(x) \in \mathcal{A}_0$ for any $x \in \mathcal{A}_{0,\infty}$. Indeed, we get that
\begin{eqnarray*}
	|x-\mathbf{x}^{(0)}|_p=|x-\mathbf{x}^{(\infty)}|_p&=&|q|_p,\\	
	|\theta x+q-1|_p&=&\left|\theta(x-\mathbf{x}^{(\infty)})+(\theta-1)(1-\theta-q)\right|_p=|q|_p,\\
	|g(x)|_p&\leq& |q|^2_p. 
\end{eqnarray*}
We obtain from \eqref{fminusx0p=2mod3} that $|f_{\theta,q,3}(x)-\mathbf{x}^{(0)}|_p \leq {|\theta-1|_p}<|q|_p$ for any $x \in \mathcal{A}_{0,\infty}$.

It is easy to check that
\begin{eqnarray}\label{IsingPottsSingularp=2mod3}
f_{\theta, q, 3}(x)= \left(\theta+\frac{(\theta-1)(1-\theta-q)}{x-\mathbf{x}^{(\infty)}}\right)^3
\end{eqnarray}
and
\begin{eqnarray}\label{IsingPottsSingular2p=2mod3}
f_{\theta, q, 3}(x)-\mathbf{x}^{(0)}=\frac{(\theta-1)(x-\mathbf{x}^{(0)})}{x-\mathbf{x}^{(\infty)}}g_1(x)
\end{eqnarray}
where $g_1(x)=\left(\theta+\frac{(\theta-1)(1-\theta-q)}{x-\mathbf{x}^{(\infty)}}\right)^2+\left(\theta+\frac{(\theta-1)(1-\theta-q)}{x-\mathbf{x}^{(\infty)}}\right)+1$. 

Let us now show that $f_{\theta,q,3}(x) \in \mathcal{A}_0$ for any $x \in \mathcal{A}_2.$  Indeed, for any $x \in \mathcal{A}_2,$ we get that $\left|x-\mathbf{x}^{(0)}\right|_p=|q|_p$ and
\begin{eqnarray*}
	\left|\frac{(\theta-1)(1-\theta-q)}{x-\mathbf{x}^{(\infty)}}\right|_p&=&\frac{|q|_p|\theta-1|_p}{|x-\mathbf{x}^{(\infty)}|_p}<1,\\
	|g_1(x)|_p&=& 1,\\
	\left|f_{\theta, q, 3}(x)-\mathbf{x}^{(0)}\right|_p&=&\frac{\left|\theta-1\right|_p}{\left|x-\mathbf{x}^{(\infty)}\right|_p}\left|x-\mathbf{x}^{(0)}\right|_p<\left|x-\mathbf{x}^{(0)}\right|_p=|q|_p.
\end{eqnarray*}

Consequently, we show that $\mathcal{A}_0 \cup \mathcal{A}_1 \cup \mathcal{A}_2 \cup \mathcal{A}_{0,\infty}\subset f^{(-1)}_{\theta,q,3}\left(\mathcal{A}_0\right)$.

(ii) Now, we want to show that $f_{\theta,q,3}(x) \in \mathcal{A}_1$ for any $x \in \mathcal{A}_{\infty}.$ 

We consider the following sets
\begin{eqnarray*}
	\mathcal{A}^{(1)}_{\infty} &:=& \left\{ x \in \mathbb{Q}_p : \ 0<\left|x-\mathbf{x}^{(\infty)}\right|_p < |q|_p\left|\theta-1\right|_p \right\}\\
	\mathcal{A}^{(2)}_{\infty} &:=& \left\{ x \in \mathbb{Q}_p : \ \left|x-\mathbf{x}^{(\infty)}\right|_p = |q|_p\left|\theta-1\right|_p \right\}\\
	\mathcal{A}^{(3)}_{\infty} &:=& \left\{ x \in \mathbb{Q}_p : \ |q|_p\left|\theta-1\right|_p <\left|x-\mathbf{x}^{(\infty)}\right|_p < \left|\theta-1\right|_p \right\}
\end{eqnarray*}
where $\mathcal{A}_{\infty}=\mathcal{A}^{(1)}_{\infty}\cup \mathcal{A}^{(2)}_{\infty}\cup \mathcal{A}^{(3)}_{\infty}.$

Let $x \in \mathcal{A}^{(1)}_{\infty}$. It follows from \eqref{IsingPottsSingularp=2mod3} that 
\begin{eqnarray*}
	\left|\frac{(\theta-1)(1-\theta-q)}{x-\mathbf{x}^{(\infty)}}\right|_p&=&\frac{|q|_p|\theta-1|_p}{|x-\mathbf{x}^{(\infty)}|_p}>1,\\
	\left|f_{\theta,q,3}(x)\right|_p&=&\left|\theta+\frac{(\theta-1)(1-\theta-q)}{x-\mathbf{x}^{(\infty)}}\right|_p^3 > 1,\\
	\left|f_{\theta,q,3}(x)-\mathbf{x}^{(0)}\right|_p&=&\left|f_{\theta,q,3}(x)\right|_p>1>|q|_p.
\end{eqnarray*}
This means that  $f_{\theta,q,3}\left(\mathcal{A}^{(1)}_{\infty}\right)\subset\mathcal{A}_1$. 

Let $x \in \mathcal{A}^{(3)}_\infty$. Then $\left|x-\mathbf{x}^{(0)}\right|_p=|q|_p.$ It follows from \eqref{IsingPottsSingular2p=2mod3} that  
\begin{eqnarray*}
	\left|\frac{(\theta-1)(1-\theta-q)}{x-\mathbf{x}^{(\infty)}}\right|_p&=&\frac{|\theta-1|_p|q|_p}{|x-\mathbf{x}^{(\infty)}|_p}<1,\\ 
	\left|g_1(x)\right|_p&=& 1,\\
	|q|_p<\left|f_{\theta,q,3}(x)-\mathbf{x}^{(0)}\right|_p&=&\frac{\left|\theta-1\right|_p\left|x-\mathbf{x}^{(0)}\right|_p}{\left|x-\mathbf{x}^{(\infty)}\right|_p}<1.
\end{eqnarray*}
This means that $f_{\theta,q,3}\left(\mathcal{A}^{(3)}_\infty\right)\subset\mathcal{A}_1$. 

Let $x \in \mathcal{A}^{(2)}_{\infty}$ and $y:=\frac{x-\mathbf{x}^{(\infty)}}{\theta -1}.$ In this case, $|y|_p=|q|_p$ and $y=\frac{y^{*}}{|q|_p}$ where $y^{*}\in\mathbb{Z}^{*}_p.$ We then obtain that
\begin{eqnarray*}
	g(x)&=&(\theta x+q-1)^2+(\theta x+q-1)(x-\mathbf{x}^{(\infty)})+(x-\mathbf{x}^{(\infty)})^2\\
	&=&(\theta-1)^2\left[(\theta^2+\theta+1)y^2+(2\theta+1)(1-\theta-q)y+(1-\theta-q)^2\right]\\
	&=&\frac{(\theta-1)^2}{|q|^2_p}\left[(\theta^2+\theta+1)(y^{*})^2+(2\theta+1)(1-\theta-q)^{*}(y^{*})+\left((1-\theta-q)^{*}\right)^2\right]
\end{eqnarray*}

Now, we want to show that 
$$
\left|(\theta^2+\theta+1)(y^{*})^2+(2\theta+1)(1-\theta-q)^{*}(y^{*})+\left((1-\theta-q)^{*}\right)^2\right|_p=1
$$
for any $y^{*}\in\mathbb{Z}^{*}_p$ and $p\equiv 2 \ (mod \ 3).$ To do so, it is enough to show that the following congruent equation
$$
3t^2+3(1-\theta-q)^{*}t+\left((1-\theta-q)^{*}\right)^2\equiv 0 \ (mod \  p)
$$
 does not have any root in $\mathbb{Z}.$ Indeed, the discriminant $D=-3\left((1-\theta-q)^{*}\right)^2$ of the last quadratic congruent equation is \underline{a quadratic non-residue} whenever $p\equiv 2 \ (mod \ 3)$ (or equivalently $-3$ is \underline{a quadratic non-residue} whenever $p\equiv 2 \ (mod \ 3)$). Therefore, the last quadratic congruent equation  does not have any root in $\mathbb{Z}.$ 
 
Hence, we get that $|g(x)|_p=|q|^2_p|\theta-1|^2_p$ for any $x \in \mathcal{A}^{(2)}_{\infty}.$ It follows from \eqref{fminusx0p=2mod3} and $\left|x-\mathbf{x}^{(0)}\right|_p=|q|_p$ that $|f_{\theta,q,3}(x)-\mathbf{x}^{(0)}|_p=1>|q|_p$ for any $x\in \mathcal{A}^{(2)}_{\infty}.$ This means that $f_{\theta,q,3}\left(\mathcal{A}^{(2)}_{\infty}\right)\subset\mathcal{A}_1.$

Consequently, we show that $\mathcal{A}_{\infty}\subset f^{(-1)}_{\theta,q,3}\left(\mathcal{A}_{1} \right)\subset f^{(-2)}_{\theta,q,3}\left(\mathcal{A}_0\right).$

(iii) Let us show that $f_{\theta,q,3}(x) \in \mathcal{A}_{0,\infty}$ for any $x \in \mathcal{A}^{(1)}_{1,\infty}.$ 

We first show that $|f_{\theta,q,3}(x)-\mathbf{x}^{(0)}|_p=|q|_p$ for any  $x \in \mathcal{A}^{(1)}_{1,\infty}.$ Indeed, we obtain from \eqref{IsingPottsSingular2p=2mod3} that
\begin{eqnarray*}
	\left|\frac{(\theta-1)(1-\theta-q)}{x-\mathbf{x}^{(\infty)}}\right|_p&=&\frac{|\theta-1|_p|q|_p}{|x-\mathbf{x}^{(\infty)}|_p}<1,\\ 
	\left|g_1(x)-3\right|_p<\left|g_1(x)\right|_p&=& 1,\\
	\left|f_{\theta,q,3}(x)-\mathbf{x}^{(0)}\right|_p&=&\frac{\left|\theta-1\right|_p\left|x-\mathbf{x}^{(0)}\right|_p}{\left|x-\mathbf{x}^{(\infty)}\right|_p}=\left|x-\mathbf{x}^{(0)}\right|_p=|q|_p.
\end{eqnarray*}

We now show that $|f_{\theta,q,3}(x)-\mathbf{x}^{(\infty)}|_p=|q|_p$ for any  $x \in \mathcal{A}^{(1)}_{1,\infty}.$ Since  $|f_{\theta,q,3}(x)-\mathbf{x}^{(\infty)}|_p=|f_{\theta,q,3}(x)-\mathbf{x}^{(0)}+q+(\theta-1)|_p,$ it is enough to show that $|f_{\theta,q,3}(x)-\mathbf{x}^{(0)}+q|_p=|q|_p.$ It is easy to check that
\begin{multline*}
	f_{\theta,q,3}(x)-\mathbf{x}^{(0)}+q=\frac{(\theta-1)(x-\mathbf{x}^{(0)})}{x-\mathbf{x}^{(\infty)}}\left(g_1(x)-3\right)\\+3\left((\theta-1)-\frac{(\theta-1)^2}{x-\mathbf{x}^{(\infty)}}\right)+\frac{q(\theta-1)(\mathbf{y}^{(1)}-3)}{x-\mathbf{x}^{(\infty)}}+q\frac{x-\mathbf{x}^{(1)}}{x-\mathbf{x}^{(\infty)}}.
\end{multline*}
Since $|\mathbf{y}^{(1)}-3|_p=|x-\mathbf{x}^{(0)}|_p=|q|_p,\ |x-\mathbf{x}^{(\infty)}|_p=|x-\mathbf{x}^{(1)}|_p=|\theta-1|_p, \ |g_1(x)-3|_p<1,$ we obtain from the above equality that $|f_{\theta,q,3}(x)-\mathbf{x}^{(0)}+q|_p=|q|_p.$ This means that $|f_{\theta,q,3}(x)-\mathbf{x}^{(\infty)}|_p=|q|_p$.  Consequently, $f_{\theta,q,3}\left(\mathcal{A}^{(1)}_{1,\infty}\right)\subset\mathcal{A}_{0,\infty}$ or equivalently $\mathcal{A}^{(1)}_{1,\infty}\subset f^{(-1)}_{\theta,q,3}\left(\mathcal{A}_{0,\infty} \right)\subset f^{(-2)}_{\theta,q,3}\left(\mathcal{A}_0\right).$ This completes the proof.
\end{proof}

\begin{proposition}\label{setA21infty}
Let $p\geq 5$ with $p \equiv 2 \ (mod \ 3)$ and  $|\theta-1|_p < |q|_p < 1, \ (\theta-1)(1-\theta-q)\neq 0$. The following statements hold:
 \begin{itemize}
 	\item[(i)] For any $\bar{x},\bar{\bar{x}}\in\mathcal{A}^{(2)}_{1,\infty},$ one has that $\left|f_{\theta, q, 3}(\bar{x}) -f_{\theta, q, 3}(\bar{\bar{x}})\right|_p=\frac{|q|_p}{|\theta-1|_p}|\bar{x}-\bar{\bar{x}}|_p;$
 	\item[(ii)] For any $x\in\mathcal{A}^{(2)}_{1,\infty}\setminus\{\mathbf{x}^{(1)}\},$ there exists $n_0\in\mathbb{N}$ such that $f^{(n_0)}_{\theta,q,3}(x)\not\in\mathcal{A}^{(2)}_{1,\infty}.$
 \end{itemize}
\end{proposition}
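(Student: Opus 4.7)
The plan is to establish (i) by a direct factorisation and then derive (ii) as a dynamical consequence of the expansion proved in (i). For (i), the key reformulation is \eqref{IsingPottsSingularp=2mod3}: writing $A(x):=\theta+\frac{(\theta-1)(1-\theta-q)}{x-\mathbf{x}^{(\infty)}}$, one has $f_{\theta,q,3}(x)=A(x)^3$, so
$$
f_{\theta,q,3}(\bar x)-f_{\theta,q,3}(\bar{\bar x})=\bigl(A(\bar x)-A(\bar{\bar x})\bigr)\bigl(A(\bar x)^2+A(\bar x)A(\bar{\bar x})+A(\bar{\bar x})^2\bigr).
$$
Since
$$
A(\bar x)-A(\bar{\bar x})=(\theta-1)(1-\theta-q)\,\frac{\bar{\bar x}-\bar x}{(\bar x-\mathbf{x}^{(\infty)})(\bar{\bar x}-\mathbf{x}^{(\infty)})},
$$
and since on $\mathcal{A}^{(2)}_{1,\infty}$ we have $|\bar x-\mathbf{x}^{(\infty)}|_p=|\bar{\bar x}-\mathbf{x}^{(\infty)}|_p=|\theta-1|_p$ together with $|1-\theta-q|_p=|q|_p$ (because $|\theta-1|_p<|q|_p$), we obtain immediately $|A(\bar x)-A(\bar{\bar x})|_p=\frac{|q|_p}{|\theta-1|_p}|\bar x-\bar{\bar x}|_p$.

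It remains to show $|A(\bar x)^2+A(\bar x)A(\bar{\bar x})+A(\bar{\bar x})^2|_p=1$. For any $x\in\mathcal{A}^{(2)}_{1,\infty}$ the term $\frac{(\theta-1)(1-\theta-q)}{x-\mathbf{x}^{(\infty)}}$ has $p$-adic norm $|q|_p<1$, so $A(x)\equiv\theta\equiv 1\pmod{p}$ and $|A(x)|_p=1$. Therefore $A(\bar x)^2+A(\bar x)A(\bar{\bar x})+A(\bar{\bar x})^2\equiv 3\pmod p$, which has norm $1$ since $p\ge 5$. Combining these two estimates proves (i). The only mildly delicate point here is keeping track of the cancellations in $A(\bar x)-A(\bar{\bar x})$; the rest is purely valuation bookkeeping.

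For (ii), the natural strategy is to take $\mathbf{x}^{(1)}$ as reference point and iterate (i). A short check based on $\mathbf{x}^{(1)}-\mathbf{x}^{(\infty)}=(\theta-1)\mathbf{y}^{(1)}$ together with $|\mathbf{y}^{(1)}|_p=1$ (Proposition \ref{structureroot}) shows $\mathbf{x}^{(1)}\in\mathcal{A}^{(2)}_{1,\infty}$. Given $x\in\mathcal{A}^{(2)}_{1,\infty}\setminus\{\mathbf{x}^{(1)}\}$, set $x_n:=f^{(n)}_{\theta,q,3}(x)$ and assume for contradiction that $x_n\in\mathcal{A}^{(2)}_{1,\infty}$ for every $n\in\mathbb{N}$. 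Applying (i) to the pair $(x_n,\mathbf{x}^{(1)})$ and using that $\mathbf{x}^{(1)}$ is fixed yields the recursion
$$
|x_{n+1}-\mathbf{x}^{(1)}|_p=\frac{|q|_p}{|\theta-1|_p}\,|x_n-\mathbf{x}^{(1)}|_p,\qquad\text{hence}\qquad |x_n-\mathbf{x}^{(1)}|_p=\Bigl(\frac{|q|_p}{|\theta-1|_p}\Bigr)^n|x-\mathbf{x}^{(1)}|_p.
$$
Since $|q|_p/|\theta-1|_p>1$ and $|x-\mathbf{x}^{(1)}|_p>0$, the right-hand side tends to $+\infty$, contradicting the constraint $|x_n-\mathbf{x}^{(1)}|_p<|\theta-1|_p$ imposed by membership in $\mathcal{A}^{(2)}_{1,\infty}$. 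Therefore some smallest $n_0$ with $x_{n_0}\notin\mathcal{A}^{(2)}_{1,\infty}$ exists, which is exactly the desired conclusion. The main obstacle in the whole argument is really (i); once it is secured, (ii) reduces to the trivial observation that a strictly expanding orbit cannot remain in a bounded ball.
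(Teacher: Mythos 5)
Your proposal is correct and takes essentially the same route as the paper: your factorisation $f(\bar x)-f(\bar{\bar x})=\bigl(A(\bar x)-A(\bar{\bar x})\bigr)\bigl(A(\bar x)^2+A(\bar x)A(\bar{\bar x})+A(\bar{\bar x})^2\bigr)$ reproduces exactly the paper's identity, whose factor $F_{\theta,q,3}(\bar x,\bar{\bar x})$ is precisely your quadratic term, and the norm computations agree. Your part (ii) is likewise the paper's expansion-away-from-$\mathbf{x}^{(1)}$ argument (using that $\mathbf{x}^{(1)}\in\mathcal{A}^{(2)}_{1,\infty}$ is fixed), merely phrased as a contradiction rather than by explicitly selecting $n_0$.
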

\begin{proof}
(i) It is easy to check that 
\begin{equation*}
	f_{\theta, q, 3}(\bar{x}) -f_{\theta, q, 3}(\bar{\bar{x}})=
	\frac{(\theta-1)(1-\theta-q)(\bar{\bar{x}}-\bar{x})}{(\bar{x}-\mathbf{x}^{\infty})(\bar{\bar{x}}-\mathbf{x}^{\infty})}F_{\theta, q, 3}\left(\bar{x},\bar{\bar{x}}\right)
\end{equation*}
where
\begin{multline*}
	F_{\theta, q, 3}(\bar{x},\bar{\bar{x}})=3\theta^2+3\theta(1-\theta-q)\left(\frac{\theta-1}{\bar{x}-\mathbf{x}^{\infty}}+\frac{\theta-1}{\bar{\bar{x}}-\mathbf{x}^{\infty}}\right)\\
	+(1-\theta-q)^2\left(\frac{(\theta-1)^2}{\left(\bar{x}-\mathbf{x}^{\infty}\right)^2}+\frac{(\theta-1)^2}{\left(\bar{x}-\mathbf{x}^{\infty}\right)\left(\bar{\bar{x}}-\mathbf{x}^{\infty}\right)}+\frac{(\theta-1)^2}{\left(\bar{\bar{x}}-\mathbf{x}^{\infty}\right)^2}\right).
\end{multline*}

Let $\bar{x},\bar{\bar{x}}\in\mathcal{A}^{(2)}_{1,\infty}.$ We then have that
\begin{eqnarray*}
	\left|3\theta(1-\theta-q)\left(\frac{\theta-1}{\bar{x}-\mathbf{x}^{\infty}}+\frac{\theta-1}{\bar{\bar{x}}-\mathbf{x}^{\infty}}\right)\right|_p\leq |q|_p,\\
	\left|(1-\theta-q)^2\left(\frac{(\theta-1)^2}{\left(\bar{x}-\mathbf{x}^{\infty}\right)^2}+\frac{(\theta-1)^2}{\left(\bar{x}-\mathbf{x}^{\infty}\right)\left(\bar{\bar{x}}-\mathbf{x}^{\infty}\right)}+\frac{(\theta-1)^2}{\left(\bar{\bar{x}}-\mathbf{x}^{\infty}\right)^2}\right) \right|_p\leq|q|_p^2,\\
	|F_{\theta, q, 3}(\bar{x},\bar{\bar{x}})|_p=1\\
	\left|f_{\theta, q, 3}(\bar{x}) -f_{\theta, q, 3}(\bar{\bar{x}})\right|_p=\frac{|q|_p}{|\theta-1|_p}|\bar{x}-\bar{\bar{x}}|_p.
\end{eqnarray*}

(ii) Let $x\in\mathcal{A}^{(2)}_{1,\infty}\setminus\{\mathbf{x}^{(1)}\}$ be any point and $r:=|x-\mathbf{x}^{(1)}|_p>0.$ Since $\frac{|\theta-1|_p}{|q|_p}<1,$ there exists $n_0$ such that $\left(\frac{|\theta-1|_p}{|q|_p}\right)^{n_0}\leq\frac{r}{|\theta-1|_p}<\left(\frac{|\theta-1|_p}{|q|_p}\right)^{n_0-1}$ or equivalently $\left(\frac{|q|_p}{|\theta-1|_p}\right)^{n_0-1}r < |\theta-1|_p\leq \left(\frac{|q|_p}{|\theta-1|_p}\right)^{n_0}r.$ This means that 
$$
|f_{\theta, q, 3}^{(n_0-1)}(x)-\mathbf{x}^{(1)}|_p=\frac{|q|^{n_0-1}_pr}{|\theta-1|^{n_0-1}_p} < |\theta-1|_p\leq \frac{|q|^{n_0}_pr}{|\theta-1|^{n_0}_p}=|f_{\theta, q, 3}^{(n_0)}(x)-\mathbf{x}^{(1)}|_p
$$
or equivalently
$$
f_{\theta, q, 3}^{(n_0-1)}(x)\in \mathcal{A}^{(2)}_{1,\infty} \quad  \textup{and} \quad  f_{\theta, q, 3}^{(n_0)}(x)\not\in \mathcal{A}^{(2)}_{1,\infty}.
$$
This completes the proof.
\end{proof}

We now describe the basin of attraction of the attracting fixed point.

\begin{theorem}
Let $p\geq 5$ with $p \equiv 2 \ (mod \ 3)$ and  $|\theta-1|_p < |q|_p < 1, \ (\theta-1)(1-\theta-q)\neq 0$. Then, we have that $$\mathfrak{B}\left( \mathbf{x}^{(0)} \right)= \mathbb{Q}_p\setminus\left( \{\mathbf{x}^{(1)}\}\cup \bigcup\limits_{n=0}^{+\infty}f_{\theta, q, 3}^{(-n)}\{\mathbf{x}^{(\infty)}\}\right).$$ 
\end{theorem}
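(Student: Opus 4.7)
The plan is to decompose $\mathbb{Q}_p\setminus\{\mathbf{x}^{(\infty)}\}$ into the seven pieces $\mathcal{A}_0$, $\mathcal{A}_1$, $\mathcal{A}_2$, $\mathcal{A}_{0,\infty}$, $\mathcal{A}^{(1)}_{1,\infty}$, $\mathcal{A}^{(2)}_{1,\infty}$, $\mathcal{A}_\infty$ introduced above and to prove the two inclusions separately, showing that orbits are funneled into $\mathcal{A}_0$ unless they collide with the exceptional set.

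For the inclusion $\supseteq$, I would first verify that $\mathcal{A}_0$ is forward-invariant and that $f_{\theta,q,3}$ contracts toward $\mathbf{x}^{(0)}$ on $\mathcal{A}_0$: the estimate established inside the proof of Proposition~\ref{p=2mod3}(i) already yields
\[
\left|f_{\theta,q,3}(x)-\mathbf{x}^{(0)}\right|_p\leq\frac{|\theta-1|_p}{|q|_p}\left|x-\mathbf{x}^{(0)}\right|_p, \qquad x\in\mathcal{A}_0,
\]
and since $|\theta-1|_p/|q|_p<1$, iteration drives every point of $\mathcal{A}_0$ to $\mathbf{x}^{(0)}$. Next, by Proposition~\ref{p=2mod3}, any point of $\mathcal{A}_1\cup\mathcal{A}_2\cup\mathcal{A}_{0,\infty}$ enters $\mathcal{A}_0$ after one step, any point of $\mathcal{A}_\infty$ enters $\mathcal{A}_0$ after two steps (via $\mathcal{A}_1$), and any point of $\mathcal{A}^{(1)}_{1,\infty}$ enters $\mathcal{A}_0$ after two steps (via $\mathcal{A}_{0,\infty}$). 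Six of the seven pieces are thus contained in $\mathfrak{B}(\mathbf{x}^{(0)})$. The remaining piece $\mathcal{A}^{(2)}_{1,\infty}$ is handled by Proposition~\ref{setA21infty}(ii): for any $x\in \mathcal{A}^{(2)}_{1,\infty}\setminus\{\mathbf{x}^{(1)}\}$, some iterate $f_{\theta,q,3}^{(n_0)}(x)$ leaves $\mathcal{A}^{(2)}_{1,\infty}$; under the hypothesis $x\notin \bigcup_{n\geq 0}f_{\theta,q,3}^{(-n)}\{\mathbf{x}^{(\infty)}\}$ this image is defined and different from $\mathbf{x}^{(\infty)}$, hence it falls into one of the six previous pieces and from there is driven to $\mathbf{x}^{(0)}$.

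For the reverse inclusion $\subseteq$, the two excluded families contribute obvious obstructions. First, $\mathbf{x}^{(1)}$ is, by Theorem~\ref{fixedpoints}, a fixed point distinct from $\mathbf{x}^{(0)}$, so its orbit is constant and does not tend to $\mathbf{x}^{(0)}$. Second, if $x\in f_{\theta,q,3}^{(-n)}\{\mathbf{x}^{(\infty)}\}$ for some $n\geq 0$, then $f_{\theta,q,3}^{(n)}(x)=\mathbf{x}^{(\infty)}\notin\textup{\textbf{Dom}}\{f_{\theta,q,3}\}$, so the forward orbit is not even defined past step $n$ and cannot converge to $\mathbf{x}^{(0)}$.

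The main obstacle is the set $\mathcal{A}^{(2)}_{1,\infty}$ of points close to the repelling fixed point $\mathbf{x}^{(1)}$: there $f_{\theta,q,3}$ is locally expanding by the factor $|q|_p/|\theta-1|_p>1$ (Proposition~\ref{setA21infty}(i)), and one must use that expansion to certify that non-fixed orbits escape to a ``good'' piece in finitely many steps. The delicate bookkeeping is that the escape trajectory must be shown to miss the singular point $\mathbf{x}^{(\infty)}$, and this is precisely why the theorem subtracts the full backward orbit $\bigcup_{n\geq 0}f_{\theta,q,3}^{(-n)}\{\mathbf{x}^{(\infty)}\}$ rather than just $\{\mathbf{x}^{(\infty)}\}$ itself.
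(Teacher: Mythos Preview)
Your proposal is correct and follows essentially the same route as the paper's proof: contract on $\mathcal{A}_0$, feed the other five pieces into $\mathcal{A}_0$ in at most two steps via Proposition~\ref{p=2mod3}, and use the expansion in Proposition~\ref{setA21infty} to expel orbits from $\mathcal{A}^{(2)}_{1,\infty}\setminus\{\mathbf{x}^{(1)}\}$. The paper's write-up is terser (it does not spell out the reverse inclusion or the bookkeeping about avoiding $\mathbf{x}^{(\infty)}$ along the way), but the underlying argument is the same as yours.
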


\begin{proof}
Let  $\bigcup\limits_{n=0}^{+\infty}f_{\theta, q, 3}^{(-n)}\{\mathbf{x}^{(\infty)}\}$ be a set of all eventually singular points of the Potts--Bethe mapping.  Let $x\in\mathcal{A}_0.$ It follows from Proposition \ref{p=2mod3} (i) that $|f_{\theta, q, 3}(x)-\mathbf{x}^{(0)}|_p \leq \frac{|\theta-1|_p}{|q|_p}|x-\mathbf{x}^{(0)}|_p$ where $\frac{|\theta-1|_p}{|q|_p}<1$. 
This means that $f_{\theta, q, 3}\left(\mathcal{A}_0\right)\subset \mathcal{A}_0$. Let $x\in \mathfrak{B}\left(\mathbf{x}^{(0)}\right)$ be any point. This means that $\lim\limits_{n \to +\infty}f^{(n)}_{\theta, q, 3}(x)=\mathbf{x}^{(0)}.$ Then there exist some $n_0\in\mathbb{N}$ such that $f_{\theta, q, 3}^{(n_0)}(x)\in\mathcal{A}_0.$ This means that $x\in f_{\theta,q,3}^{(-n_0)}\left(\mathcal{A}_0\right)$ or equivalently
$$\mathfrak{B}\left(\mathbf{x}^{(0)} \right) = \bigcup^{+\infty}_{n = 0}f_{\theta,q,3}^{(-n)}\left(\mathcal{A}_0\right).$$
Due to Proposition  \ref{p=2mod3} (i)-(iii), we have that 
$$
\textup{\textbf{Dom}}\{f_{\theta,q,3}\}\setminus \mathcal{A}^{(2)}_{1,\infty} \subset f_{\theta,q,3}^{(-1)}\left(\mathcal{A}_0\right)\cup f_{\theta,q,3}^{(-2)}\left(\mathcal{A}_0\right)\subset \mathfrak{B}\left(\mathbf{x}^{(0)} \right). 
$$
Moreover, due to Proposition \ref{setA21infty} (ii), for any $x\in \mathcal{A}^{(2)}_{1,\infty}\setminus \{\mathbf{x}^{(1)}\}$ there exists $n_0$ such that $f^{(n_0)}_{\theta,q,3}(x)\in \mathbb{Q}_p\setminus \mathcal{A}^{(2)}_{1,\infty}\subset\mathfrak{B}\left( \mathbf{x}^{(0)} \right)\cup \{\mathbf{x}^{(\infty)}\}.$ This completes the proof.
\end{proof}			


\section{Conclusions}

In this paper, we have studied the dynamics of the Potts--Bethe mapping associated with the  $p-$adic $q-$state Potts model on the Cayley tree of order-3 in the $p-$adic field $\mathbb{Q}_p$ with $p \equiv 2 \ (mod \ 3)$.  Namely, we have showed that the Potts--Bethe mapping has two fixed points one of them is attracting and another one is repelling. Moreover, a trajectory of the Potts--Bethe mapping starting from an initial eventually non-singular point converges to the attracting point.

\section*{Acknowledgments}
The first author (M.S.) is grateful to the Junior Associate scheme of the Abdus Salam International Centre for Theoretical Physics, Trieste, Italy.  The second author (M.A.Kh.A) is grateful to the MOHE grant FRGS14-141-0382 for the financial support. He is also indebted  to  Embassy of France in Malaysia and Labex B\'ezout, Universit\'e Paris--Est for the financial support to pursue his PhD study at LAMA, Universit\'e Paris--Est Cr\'eteil, France. 

\section*{References}

\end{document}